\documentclass[a4paper,11pt]{amsart}
\usepackage[T1]{fontenc}
\usepackage[english]{babel}
\usepackage[cp1252]{inputenc}
\usepackage{amsthm}
\usepackage{amsmath}
\usepackage{amsfonts}
\usepackage{amssymb}
\usepackage{hyperref}
\usepackage{color}
\usepackage{caption}
\usepackage{indentfirst}
\usepackage{amssymb}
\usepackage{eufrak}
\usepackage{mathrsfs}
\usepackage{xypic}
\usepackage[pdftex]{graphicx}
\usepackage{tikz}

\usepackage{array,booktabs,tabularx}

\usepackage{booktabs}

\usepackage{listings}

\theoremstyle{plain}   

\usepackage{booktabs}
\usepackage{graphicx}
\usepackage{array}

\newcommand{\R}{\mathbb R}
\newcommand{\C}{\mathbb C}

\newcommand{\Z}{\mathbb Z}

\newcommand{\Arg}{\mathrm{Arg}\,}

\newcommand{\Log}{\mathrm{Log}\,}

 \newcommand{\Rea}{\operatorname{Re}}
 \newcommand{\Ima}{\operatorname{Im}}
 \newcommand{\Gr}{\operatorname{Gr}}

\newcommand{\Rdeg}{\mathbb R\operatorname{deg}}

\newcommand{\di}{\displaystyle}

\newcommand{\Crit}{\operatorname{Crit}}

\newcommand{\rank}{\operatorname{rank}}

\newcommand{\Conv}{\operatorname{Conv}}

\newcommand{\CA}{\mathcal C\mathcal A}

\newcommand{\Ker}{\operatorname{Ker}}

\usepackage{array}
\usepackage{tabularx}

\newcommand\restr[2]{{
  \left.\kern-\nulldelimiterspace 
  #1 
  \right|_{#2} 
  }}

\newtheorem{remark}{Remark}[section]
\newtheorem*{mtheorem*}{Main Theorem}
\newtheorem{theorem}{Theorem}[section]

\newtheorem{proposition}{Proposition}[section]
\newtheorem{corollary}{Corollary}[section]
\newtheorem{lemma}{Lemma}[section]

\begin{document}
\title{Contour Degree of Amoebas of Complete Intersections} \date{}

\author{Mounir Nisse}

\address{Mounir Nisse\\
Department of Mathematics, Xiamen University Malaysia, Jalan Sunsuria, Bandar Sunsuria, 43900, Sepang, Selangor, Malaysia.
}
\email{mounir.nisse@gmail.com, mounir.nisse@xmu.edu.my}

\thanks{Research of M. Nisse is supported in part by Xiamen University Malaysia Research Fund (Grant no. XMUMRF/ 2020-C5/IMAT/0013).}

\subjclass[2010]{14P15, 32A60, 14T20, 52B20}
\keywords{Amoebas, contour of an amoeba, logarithmic Gauss map, real contour degree, sparse elimination, mixed volume, Bernstein's theorem, Pfaffian manifold}

\maketitle

\begin{abstract}
We establish the first universal upper bounds for the real degree of the contour of the amoeba of a smooth complete intersection in the algebraic torus. Our approach extends the Pfaffian method of Lang--Shapiro--Shustin from hypersurfaces to arbitrary codimension by introducing a logarithmic conormal framework based on the logarithmic conormal bundle, the logarithmic Grassmann map, and determinantal rank conditions. We prove that, on suitable logarithmic conormal charts, the critical locus is locally defined by Schur--complement equations arising from the logarithmic conormal matrix. This yields explicit universal contour-degree estimates in both regimes $n\ge 2r$ and $n<2r$. We further replace total-degree arguments by Bernstein's theorem to obtain sparse mixed-volume bounds determined by the Newton polytopes of the transformed equations. These estimates are frequently much sharper than the corresponding universal bounds and provide a higher-codimensional analogue of the Lang--Shapiro--Shustin theory together with a new geometric interpretation of amoeba contours through logarithmic conormal geometry.
\end{abstract}

\section*{Introduction}

The geometry of amoebas has become a central topic at the interface of algebraic geometry, tropical geometry, complex analysis and real algebraic geometry. Since the pioneering work of Gelfand, Kapranov and Zelevinsky \cite{GKZ94}, Bergman \cite{Bergman71}, Bieri and Groves \cite{BieriGroves84}, and Forsberg, Passare and Tsikh \cite{ForsbergPassareTsikh00}, amoebas have provided a powerful bridge between algebraic varieties and their tropical counterparts. Their asymptotic behaviour is governed by logarithmic limit sets and tropical varieties, while their global geometry reflects subtle properties of the defining equations and their Newton polytopes. A comprehensive account of these developments may be found in the monograph of Maclagan and Sturmfels \cite{MaclaganSturmfels15}.

Among the geometric invariants associated with an amoeba, its contour plays a fundamental role. It is defined as the set of critical values of the logarithmic map and governs many geometric and topological properties of the amoeba. For hypersurfaces, the contour is closely related to the logarithmic Gauss map and has been extensively investigated by Mikhalkin \cite{Mikhalkin00}, Passare and Rullg{\aa}rd \cite{PassareRullgard04}, Nisse and Sottile \cite{NisseSottile13}, and several other authors. A major advance was achieved by Lang, Shapiro and Shustin \cite{LangShapiroShustin21}, who combined Khovanskii's theory of simple Pfaffian manifolds \cite{Khovanskii91} with logarithmic geometry to obtain an explicit universal upper bound for the real degree of the contour of the amoeba of a hypersurface depending only on the degree of the defining polynomial.

The principal objective of the present paper is to extend the theory of Lang--Shapiro--Shustin \cite{LangShapiroShustin21} from hypersurfaces to arbitrary smooth complete intersections in the algebraic torus. This extension is not formal. In codimension one, the logarithmic gradient determines a unique logarithmic conormal direction and the logarithmic Gauss map takes values in projective space. In higher codimension one must instead work with the entire logarithmic conormal bundle, and the logarithmic Gauss map is naturally replaced by a Grassmann-valued logarithmic conormal map. Consequently, the hypersurface arguments must be replaced by new determinantal and conormal techniques.

Our first contribution is the introduction of a logarithmic conormal framework adapted to complete intersections. Using the exact tangent--conormal sequence, we identify the logarithmic conormal bundle with the row space of the logarithmic Jacobian matrix and show that the critical locus of the logarithmic map is characterized by the existence of nonzero real logarithmic conormal directions. When $n\ge2r$, this criterion is equivalent to the rank condition
$
\rank_{\mathbb R}\mathcal M_f\le2r-1.
$
We then prove that, on every logarithmic conormal pivot chart, the critical locus is locally defined by exactly $n-2r+1$ Schur--complement equations, or equivalently by distinguished maximal minors of the logarithmic conormal matrix. These results provide the geometric foundation for all subsequent estimates.

Our second contribution is the extension of the Pfaffian method of Lang, Shapiro and Shustin \cite{LangShapiroShustin21} to complete intersections. For the range $n\ge2r$ we derive universal upper bounds for the real degree of the contour by combining the logarithmic conormal equations with the Pfaffian structure of the inverse image of a generic affine testing space under the logarithmic map. We also develop the complementary theory for the range $n<2r$, where the ordinary contour coincides with $\Log(V)$ and the natural testing spaces are affine subspaces of dimension $2r-n$. This yields a uniform theory valid in arbitrary codimension.

Our third contribution is the development of sparse bounds depending on the Newton polytopes of the transformed equations. Instead of relying only on total-degree estimates, we determine the Newton polytopes of the transformed Schur--complement equations and apply Bernstein's theorem \cite{Bernstein75} to obtain mixed-volume estimates whenever the transformed systems are Bernstein nondegenerate. The resulting directional bounds are frequently much sharper than the universal Pfaffian estimates and reflect the intrinsic sparse geometry of the defining Laurent polynomials.

The methods developed here also clarify the geometry of the logarithmic critical locus. We study its determinantal structure, describe its local equations by means of pivot charts and Schur complements, and relate the logarithmic conormal Grassmann map to the real incidence locus inside the Grassmannian. These geometric constructions have no counterpart in the hypersurface case and may be of independent interest.

The present work therefore generalizes the universal contour-degree theorem of Lang, Shapiro and Shustin \cite{LangShapiroShustin21} from hypersurfaces to arbitrary smooth complete intersections while simultaneously introducing sparse mixed-volume refinements based on Bernstein's theorem \cite{Bernstein75}. To the best of our knowledge, neither a universal contour-degree theorem nor a sparse mixed-volume contour estimate for complete intersections has previously appeared in the literature.

The paper is organized as follows. After recalling the necessary background on logarithmic geometry, complete intersections, Newton polytopes, mixed volumes and simple Pfaffian manifolds, we establish the logarithmic conormal description of the critical locus. We then prove universal contour-degree estimates in both regimes $n\ge2r$ and $n<2r$, derive sparse mixed-volume refinements, and conclude with explicit computations illustrating the improvement obtained by the sparse theory over the universal bounds.

{\it Acknowledgements.}  The author would like to express his sincere gratitude to Boris Shapiro for kindly sending his paper with Lionel Lang and Eugeni Shustin \cite{LangShapiroShustin21}. Its results and perspective have been a valuable source of motivation for the present work.
 

\section{Preliminaries}

The purpose of this paper is to establish universal upper bounds for the real degree of the contour of the amoeba of a smooth complete intersection in the algebraic torus. Our main objective is to extend to arbitrary codimension the approach introduced by Lang, Shapiro and Shustin for hypersurfaces. Their work combines the geometry of the logarithmic map with Khovanskii's theory of simple Pfaffian manifolds in order to bound the number of intersections between the contour of a hypersurface amoeba and a generic affine line. The present work develops the analogous framework for complete intersections by replacing the logarithmic gradient of one defining equation with the logarithmic conormal matrix of the whole defining system. This leads to universal Pfaffian estimates together with refined sparse mixed-volume bounds obtained from the Newton polytopes of the transformed equations.

Throughout the paper we work over the field $\C$. Let $(\C^\ast)^n=(\C\setminus\{0\})^n$ be the algebraic torus. If
$
f=\sum_{\alpha\in\Z^n}c_\alpha z^\alpha
$
is a Laurent polynomial, its Newton polytope is
$
\Delta(f)=\Conv\{\alpha\in\Z^n\mid c_\alpha\neq0\}.
$
Let
$
V=\{f_1=\cdots=f_r=0\}\subset(\C^\ast)^n
$
be a smooth complete intersection of codimension $r$. The smoothness assumption means that the differentials $df_1,\ldots,df_r$ are complex linearly independent at every point of $V$. Hence $V$ is a complex manifold of dimension $n-r$ and real dimension $2n-2r$.

The logarithmic and argument maps are
$
\Log:(\C^\ast)^n\rightarrow\R^n
$
and
$
\Arg:(\C^\ast)^n\rightarrow(\R/2\pi\Z)^n.
$
The amoeba of $V$ is
$
\mathcal A_V=\Log(V).
$
Its contour is the set of critical values of the restriction $\Log|_V$. When $n\ge2r$ this contour is generically a real hypersurface in $\R^n$, whereas for $n<2r$ the image $\Log(V)$ itself has the expected dimension $2n-2r$, and the appropriate notion of real degree is obtained by intersecting with generic affine subspaces of complementary dimension.

The logarithmic coordinates identify the tangent space of the torus with $\C^n$. Every tangent vector is uniquely written in the form $\xi_i=z_i\eta_i$. Setting
$
g_{ji}(z)=z_i\frac{\partial f_j}{\partial z_i}(z),
$
one obtains
$
df_j(z)(\xi)=\sum_{i=1}^ng_{ji}(z)\eta_i.
$
Therefore the logarithmic tangent space is
$
T_zV\simeq\ker_{\C}G_f(z),
$
where $G_f(z)$ is the $r\times n$ complex logarithmic Jacobian matrix. The logarithmic conormal space is the annihilator of $T_zV$, and the exact tangent--conormal sequence identifies it with the complex row space of $G_f(z)$. This is the natural generalization of the logarithmic gradient appearing in the hypersurface case.

Writing
$
g_{ji}=u_{ji}+iv_{ji},
$
with $u_{ji},v_{ji}\in\R$, we obtain the real logarithmic conormal matrix
$$
\mathcal M_f=
\begin{pmatrix}
v_{11}&\cdots&v_{r1}&u_{11}&\cdots&u_{r1}\\
\vdots&&\vdots&\vdots&&\vdots\\
v_{1n}&\cdots&v_{rn}&u_{1n}&\cdots&u_{rn}
\end{pmatrix}.
$$
When $n\ge2r$, the restriction of the logarithmic map has maximal rank $n$ at a generic point. A point is critical if and only if the logarithmic conormal space contains a nonzero real covector, or equivalently if and only if
$
\rank_{\R}\mathcal M_f\le2r-1.
$
This criterion replaces the reality condition on the logarithmic gradient used by Lang, Shapiro and Shustin.

On an open subset where a fixed $(2r-1)\times(2r-1)$ pivot minor of $\mathcal M_f$ is nonzero, the matrix admits a block decomposition with an invertible pivot block. The Schur complement transforms the rank condition into exactly $n-2r+1$ polynomial equations. Consequently the critical locus is locally described inside $V$ by distinguished maximal minors of the logarithmic conormal matrix. These equations constitute the algebraic input for all subsequent estimates.

The geometry of the transformed system is controlled by Newton polytopes. Every defining equation, every Schur-complement equation and every affine testing equation has an explicitly computable Newton polytope. Under Bernstein nondegeneracy, Bernstein's theorem expresses the number of isolated complex solutions as the mixed volume of these transformed Newton polytopes. This produces sparse estimates that are often substantially smaller than the corresponding total-degree B\'ezout bounds.

The universal estimates rely on the simple Pfaffian theory of Khovanskii in the form developed by Lang, Shapiro and Shustin. The inverse image under $\Log$ of a generic affine testing space is a simple Pfaffian manifold. The transformed defining equations together with the Schur-complement equations form a square polynomial system on this manifold. Applying the Pfaffian root estimate yields explicit universal bounds depending only on the degrees of the defining equations, while replacing the total-degree argument by Bernstein's theorem yields direction-dependent sparse mixed-volume bounds.

The philosophy of the paper is therefore parallel to that of Lang, Shapiro and Shustin. Their hypersurface theory starts from one logarithmic gradient and one logarithmic Gauss map. Here these objects are replaced by the logarithmic conormal matrix and the logarithmic conormal Grassmann map of a complete intersection. This replacement preserves the Pfaffian framework while extending the theory from codimension one to arbitrary codimension and simultaneously allowing sharper sparse estimates through the Newton polytopes of the transformed equations.

Unless explicitly stated otherwise, all affine testing spaces are assumed to be generic, all pivot charts are chosen so that the selected pivot minor is nonvanishing on the relevant critical lifts, and every transformed polynomial system is assumed to satisfy Bernstein nondegeneracy whenever mixed-volume estimates are used.

\section{A Complete-Intersection Extension of the Lang--Shapiro--Shustin Pfaffian Bound Via Logarithmic Conormal Minors}

Let
$
V=\{f_1=\cdots=f_r=0\}\subset(\C^\ast)^n
$
be a smooth complete intersection of codimension $r$ and complex dimension
$
m=n-r.
$
Assume
$
2r\le n.
$
Write
$
d_j=\deg(f_j),
$
$
D=\sum_{j=1}^r d_j,
$
and
$
d=\max_j d_j.
$

The aim is to formulate the complete-intersection extension directly in terms of logarithmic conormal minors, eliminating the auxiliary projective multiplier before applying Khovanskii's Pfaffian theorem. This is the presentation that specializes, when $r=1$, to the exact system used by Lang, Shapiro, and Shustin.

\subsection{The complex logarithmic Jacobian}

Define
$\di
G_f(z)
=
\left(
z_i\frac{\partial f_j}{\partial z_i}(z)
\right)_
{\substack{1\le j\le r\\1\le i\le n}}.
$
This is an $r\times n$ complex matrix.
Write
$
G_f(z)=U(z)+iV(z),
$
where $U(z)$ and $V(z)$ are real $r\times n$ matrices. Consider the real $n\times2r$ matrix
$
\mathcal M_f(z)
=
\begin{pmatrix}
V(z)^{\mathsf T} & U(z)^{\mathsf T}
\end{pmatrix}.
$
A vector
$
\lambda=u+iv\in\C^r
$
satisfies
$
G_f(z)^{\mathsf T}\lambda\in\R^n
$
if and only if
$
V(z)^{\mathsf T}u+U(z)^{\mathsf T}v=0.
$
Thus a nonzero logarithmic conormal multiplier exists if and only if
$
\ker_{\R}\mathcal M_f(z)\neq\{0\}.
$
Equivalently,
$
\rank_{\R}\mathcal M_f(z)\le2r-1.
$

\begin{lemma}[Multiplier elimination]\label{lem:elimination}
Assume that $V$ is smooth. A point $z\in V$ is critical for $\Log|_V$ if and only if
$
\rank_{\R}\mathcal M_f(z)\le2r-1.
$
\end{lemma}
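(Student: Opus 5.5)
The plan is to show that criticality of $\Log|_V$ at $z$ is the same as the existence of a nonzero real covector in the logarithmic conormal space. The computation preceding the lemma then turns that condition into the rank condition on $\mathcal M_f$.

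First I will fix logarithmic coordinates. Write $w=\Log z+i\Arg z$, so that $z_i=e^{w_i}$ and tangent vectors are $\xi_i=z_i\eta_i$. In these coordinates $\Log$ is the map $w\mapsto\Rea w$, and its differential sends $\eta\mapsto\Rea\eta$. Since $V$ is smooth, the logarithmic tangent space is $T=\ker_\C G_f(z)$, of complex dimension $n-r$. Hence $d(\Log|_V)_z\colon T\to\R^n$, $\eta\mapsto\Rea\eta$, is an $\R$-linear map from a real vector space of dimension $2n-2r\ge n$ into $\R^n$. This is where the hypothesis $2r\le n$ is used. Being critical therefore means that this map fails to be surjective.

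Next I will dualize. The map fails to be surjective if and only if there is a nonzero $a\in\R^n$ with $a\cdot\Rea\eta=0$ for all $\eta\in T$. Because $T$ is a complex subspace, replacing $\eta$ by $i\eta$ turns $a\cdot\Rea\eta=0$ into $a\cdot\Ima\eta=0$, so the condition is equivalent to $a^{\mathsf T}\eta=0$ for all $\eta\in T$. In other words, $a$ lies in the complex annihilator of $\ker_\C G_f(z)$. By the exact tangent--conormal sequence this annihilator is the row space of $G_f(z)$, so $a=G_f(z)^{\mathsf T}\lambda$ for some $\lambda\in\C^r$. By smoothness the rows of $G_f(z)$ are independent, because $z_i\neq0$ on the torus. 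Consequently $\lambda\neq0$ if and only if $a\neq0$. Thus $z$ is critical if and only if some nonzero $\lambda$ has $G_f(z)^{\mathsf T}\lambda\in\R^n$.

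Finally I will invoke the computation just before the lemma. Writing $\lambda=u+iv$, the condition $G_f(z)^{\mathsf T}\lambda\in\R^n$ is equivalent to $V^{\mathsf T}u+U^{\mathsf T}v=0$, that is, $(u,v)\in\ker_\R\mathcal M_f(z)$. A nonzero such $(u,v)$ exists exactly when $\rank_\R\mathcal M_f(z)\le2r-1$, since $\mathcal M_f$ has $2r$ columns. I also intend to double-check the sign in that computation: $\Ima(G^{\mathsf T}\lambda)=U^{\mathsf T}v+V^{\mathsf T}u$. Even if the sign were off, it would only flip the sign of one block and would not affect the rank.

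The main obstacle is the dualization step. It requires checking carefully that the real annihilator of $\Rea(T)$ coincides with the real points of the complex annihilator of $T$, which rests on $T$ being $\C$-linear. It also requires injectivity of $\lambda\mapsto G_f^{\mathsf T}\lambda$, so that nonzero multipliers correspond to nonzero covectors.
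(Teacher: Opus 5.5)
Your proposal is correct and follows essentially the same route as the paper: criticality is read as the existence of a nonzero real covector annihilating the image of $d\Log|_{T_zV}$, that covector is identified with an element $G_f(z)^{\mathsf T}\lambda$ of the complex row space, and the question is reduced to $\ker_{\R}\mathcal M_f(z)\neq\{0\}$. Your explicit use of the $\C$-linearity of $T_zV$, and of the injectivity of $\lambda\mapsto G_f(z)^{\mathsf T}\lambda$ coming from smoothness, fills in exactly the two steps the paper's proof leaves implicit.
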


\begin{proof}
A point is critical if and only if there is a nonzero real covector
$
a\in\R^n
$
annihilating the image of
$
d\Log|_{T_zV}.
$
In logarithmic tangent coordinates, this is equivalent to
$
a
$
belonging to the complex row space of $G_f(z)$. Hence there exists
$
\lambda\in\C^r\setminus\{0\}
$
such that
$
G_f(z)^{\mathsf T}\lambda=a\in\R^n.
$
Writing
$
\lambda=u+iv
$
gives
$
V^{\mathsf T}u+U^{\mathsf T}v=0.
$
This has a nonzero real solution precisely when
$
\rank_{\R}\mathcal M_f(z)\le2r-1.
$
\end{proof}

\subsection{Local equations of the determinantal locus}

Let
$$
\mathcal D_{2r-1}
=
\{M\in M_{n,2r}(\R):\rank M\le2r-1\}.
$$
Its smooth rank stratum consists of matrices of rank exactly $2r-1$. Its codimension in
$
M_{n,2r}(\R)
$
is
$
n-2r+1.
$
Put
$
c=n-2r+1.
$
Fix a set
$
I\subset\{1,\ldots,n\}
$
with
$
|I|=2r-1,
$
and assume that the $(2r-1)\times(2r-1)$ minor determined by the rows in $I$ and by the first $2r-1$ columns is nonzero. After reordering rows and columns, the matrix can be written locally as
$$
\mathcal M_f
=
\begin{pmatrix}
A & b\\
C & d
\end{pmatrix},
$$
where
$
A
$
is an invertible $(2r-1)\times(2r-1)$ matrix, $b$ is a column, $C$ has $c$ rows, and $d$ is a column with $c$ entries.
The rank condition
$
\rank\mathcal M_f\le2r-1
$
is then equivalent to
$
d-CA^{-1}b=0.
$
After multiplying by
$
\det A,
$
one obtains exactly $c$ polynomial equations. These equations are the $2r\times2r$ minors obtained by adjoining, one at a time, each remaining row to the pivot rows.
Denote these minors by
$
\Delta_{I,1}(z),\ldots,\Delta_{I,c}(z).
$

\begin{proposition}[Local complete-intersection description]\label{prop:local-minors}
On the open set where the chosen pivot minor does not vanish, the critical locus of $\Log|_V$ is cut out inside $V$ by the $c=n-2r+1$ equations
$$
\Delta_{I,1}(z)=\cdots=\Delta_{I,c}(z)=0.
$$
\end{proposition}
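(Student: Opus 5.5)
The plan is to combine Lemma~\ref{lem:elimination} with a purely linear-algebraic identification of the rank condition on the pivot chart. By Lemma~\ref{lem:elimination}, a point $z\in V$ is critical for $\Log|_V$ if and only if $\rank_{\R}\mathcal M_f(z)\le 2r-1$. So it suffices to show the following: on the open set $W_I=\{z:\det A(z)\neq0\}$, with $A$ the pivot block formed by the rows in $I$ and the first $2r-1$ columns (after reordering), the condition $\rank\mathcal M_f(z)\le 2r-1$ is equivalent to the vanishing of the $c$ bordered minors $\Delta_{I,1}(z),\ldots,\Delta_{I,c}(z)$.

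First I would write $\mathcal M_f$ in the block form $\begin{pmatrix}A&b\\ C&d\end{pmatrix}$ and multiply on the left by the invertible matrix $\begin{pmatrix}1&0\\ -CA^{-1}&1\end{pmatrix}$. Rank is unchanged, and the result is $\begin{pmatrix}A&b\\ 0&d-CA^{-1}b\end{pmatrix}$. Since $A$ is invertible of size $2r-1$, this matrix has rank $2r-1+\rank(d-CA^{-1}b)$. The column $d-CA^{-1}b$ has $c=n-2r+1$ entries, so the rank is at most $2r-1$ exactly when all of its entries vanish.

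Second, I would identify each entry with a minor. Fix the $k$-th non-pivot row and let $(C_k,d_k)$ be the corresponding row of $(C\ d)$. The $2r\times 2r$ matrix obtained by adjoining this row to the pivot rows is $\begin{pmatrix}A&b\\ C_k&d_k\end{pmatrix}$. By the Schur determinant formula, its determinant is $\det A\,(d_k-C_kA^{-1}b)$, and this is $\Delta_{I,k}$. Since $\det A\neq0$ on $W_I$, the vanishing of $d-CA^{-1}b$ is equivalent to $\Delta_{I,1}=\cdots=\Delta_{I,c}=0$. These $\Delta_{I,k}$ are polynomials in $\Rea z,\Ima z$, because the entries of $\mathcal M_f$ are the real and imaginary parts of $z_i\partial f_j/\partial z_i$. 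Intersecting with $V$ then gives the stated description.

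The main obstacle is a consistency point rather than a computation: the pivot uses the first $2r-1$ columns, and we need no rank to be hiding in another choice of column. This is handled by the reduction above, which accounts for all $2r$ columns, since the last column is $b$ stacked over $d$. The argument also shows that all other $2r\times2r$ minors vanish automatically on $W_I$ once the bordered ones do. No genericity is needed beyond $\det A\ne0$. One should also check that the reordering of rows and columns only changes the signs of the minors, so the zero locus is unaffected.
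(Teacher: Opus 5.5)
Your proposal is correct and follows essentially the same route as the paper's proof in Appendix~B. You reduce criticality to $\rank_{\R}\mathcal M_f\le 2r-1$ via Lemma~\ref{lem:elimination}, use the Schur-complement rank formula on the pivot chart to reduce this to the vanishing of the $c$ entries of $d-CA^{-1}b$, and identify each entry times $\det A$ with the bordered minor $\Delta_{I,k}$ by the block determinant formula. The one cosmetic difference is that the paper multiplies on both sides to reach block-diagonal form, whereas you stop at the block-triangular form after left multiplication; this gives the same rank count.
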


\begin{proof}
The Schur-complement calculation above shows that the maximal-minor rank condition is equivalent to the vanishing of exactly these $c$ minors when the pivot block is invertible (see Appendix B).
\end{proof}

\subsection{Degree bounds}

One may invert selected torus coordinates and multiply every defining Laurent polynomial by a Laurent monomial so that the transformed polynomial has degree at most
$
2d_j.
$
This does not alter the zero set in the torus.
On the complete intersection, multiplication of $f_j$ by a monomial multiplies its logarithmic-gradient row by a nonzero factor, because the additional derivative term is divisible by $f_j$. Therefore the logarithmic conormal rank condition is unchanged.

The real and imaginary parts of $f_j$ have degree at most
$
2d_j.
$
Every entry in the two real columns associated with the $j$-th logarithmic-gradient row has degree at most
$
2d_j.
$
A maximal $2r\times2r$ minor uses the two real columns associated with every equation $f_j$. Therefore its degree is bounded by
$\di
2\sum_{j=1}^r 2d_j
=
4D.
$
Thus the local critical-incidence system consists of
$
2r
$
equations of degrees
$
2d_1,2d_1,\ldots,2d_r,2d_r
$
and
$
c=n-2r+1
$
equations of degree at most
$
4D.
$
The total number of equations is
$
2r+c
=
2r+n-2r+1
=
n+1.
$

\subsection{The Pfaffian testing manifold}

Let
$
L\subset\R^n
$
be a generic affine line with rational direction. As in Lang--Shapiro--Shustin,
$
\Log^{-1}(L)
$
is a simple Pfaffian submanifold of
$
\R^{2n}
$
of codimension
$
q=n-1
$
and dimension
$
2n-(n-1)=n+1.
$
The polynomial one-forms defining this Pfaffian manifold have coefficients of degree at most
$
\mu=2n-1.
$
The local critical-incidence system has exactly $n+1$ equations on a Pfaffian manifold of dimension $n+1$ (see  Appendix D). Hence it is square.

\subsection{The conormal-minor Pfaffian bound}

The product of the equation-degree bounds is
$
\mathcal P_{\mathrm{CI}}
=
\left(
\prod_{j=1}^r(2d_j)^2
\right)
(4D)^c.
$
The sum of the degree-minus-one terms is
$\di
\mathcal S_{\mathrm{CI}}
=
2\sum_{j=1}^r(2d_j-1)
+
c(4D-1).
$
Therefore
$
\mathcal S_{\mathrm{CI}}
=
4D-2r+c(4D-1).
$
The Pfaffian bracket is
$
\Theta_{\mathrm{CI}}
=
4D-2r+c(4D-1)
+
(2n-1)(n-1)
+
1.
$

\begin{theorem}\label{thm:CI-LSS} %
Let
$
V\subset(\C^\ast)^n
$
be a smooth complete intersection of codimension $r$ with $2r\le n$. Let
$
L\subset\R^n
$
be a generic affine line transverse to the contour. Assume that all critical lifts above
$
L\cap\CA_V
$
lie in one logarithmic conormal chart on which a fixed $(2r-1)\times(2r-1)$ pivot minor is nonzero. Assume also that the resulting square system has only isolated nondegenerate solutions.
Then
$$
\#(L\cap\CA_V)
\le
B_{\mathrm{CI}}^{\mathrm{one}},
$$
where
$\di
B_{\mathrm{CI}}^{\mathrm{one}}
=
2^{(n-1)(n-2)/2}
\Big(
\prod_{j=1}^r(2d_j)^2
\Big)
(4D)^{n-2r+1}
\Theta_{\mathrm{CI}}^{\,n-1}
$ \\
and
$
\Theta_{\mathrm{CI}}
=
4D-2r
+
(n-2r+1)(4D-1)
+
(2n-1)(n-1)
+
1.
$
\end{theorem}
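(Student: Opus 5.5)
The plan is to reduce the count $\#(L\cap\CA_V)$ to counting isolated solutions of a square polynomial system on the simple Pfaffian manifold $\Log^{-1}(L)$, and then to apply Khovanskii's bound in the form used by Lang--Shapiro--Shustin. For a simple Pfaffian manifold of codimension $q$ in $\R^{N}$, whose defining one-forms have polynomial coefficients of degree at most $\mu$, and a square system of polynomials of degrees $p_1,\dots,p_k$ on it, that bound reads
$$
2^{q(q-1)/2}\,p_1\cdots p_k\Big(\sum_i(p_i-1)+q\mu+1\Big)^{q}.
$$
With $q=n-1$ and $\mu=2n-1$, the degree data of the subsection ``Degree bounds'' give exactly $2^{(n-1)(n-2)/2}\mathcal P_{\mathrm{CI}}\,\Theta_{\mathrm{CI}}^{\,n-1}$, which is $B_{\mathrm{CI}}^{\mathrm{one}}$. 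It therefore suffices to show two things. First, every point of $L\cap\CA_V$ has at least one lift that is an isolated solution of the local critical-incidence system. Second, distinct points of $L\cap\CA_V$ have distinct lifts.

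\textbf{Step 1 (lifting).} Let $x\in L\cap\CA_V$. Since $\CA_V$ is the set of critical values of $\Log|_V$, there is a critical point $z\in V$ with $\Log z=x$, so $z\in\Log^{-1}(L)$. By Lemma~\ref{lem:elimination}, $\rank_\R\mathcal M_f(z)\le 2r-1$. By hypothesis $z$ lies in the fixed pivot chart, so Proposition~\ref{prop:local-minors} gives $\Delta_{I,1}(z)=\cdots=\Delta_{I,c}(z)=0$. We now pass to real coordinates $(\Rea z,\Ima z)\in\R^{2n}$ and replace each $f_j$ by its monomially transformed version, which has degree at most $2d_j$. The equation $f_j=0$ then becomes the two real equations $\Rea f_j=\Ima f_j=0$. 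As noted in the degree subsection, the rank condition is unchanged by this transformation, so the minors may be recomputed from the transformed rows with degree at most $4D$. Hence $z$ solves the system of $2r+c=n+1$ real polynomial equations on the $(n+1)$-dimensional manifold $\Log^{-1}(L)$.

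\textbf{Step 2 (injectivity and isolation).} Different points $x\ne x'$ of $L$ have disjoint fibres under $\Log$, so their chosen lifts are distinct solutions. The hypothesis that the square system has only isolated nondegenerate solutions makes every such lift a nondegenerate root. Khovanskii's theorem bounds the number of nondegenerate roots, so $\#(L\cap\CA_V)$ is at most the number of these roots.

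\textbf{Step 3 (parameters of the Pfaffian chain).} Here I would recall the Lang--Shapiro--Shustin description. Write the generic rational line $L$ as $n-1$ affine equations $\langle a_k,\Log|z|\rangle=b_k$. In real coordinates $z_i=x_i+iy_i$ the function $\log|z_i|$ satisfies $d\log|z_i|=(x_i\,dx_i+y_i\,dy_i)/(x_i^2+y_i^2)$. Clearing denominators by $\prod_i(x_i^2+y_i^2)$ yields $n-1$ Pfaffian one-forms with polynomial coefficients of degree at most $2n-1$, which is $\mu$. The separating-solution (Rolle--Khovanskii) condition holds because each leaf is a level set of functions defined globally on $(\C^\ast)^n$.

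I expect Step 3 to be the main obstacle, together with checking that the multiplication by $\prod_i(x_i^2+y_i^2)$ and the monomial transformation of the $f_j$ do not create spurious solutions on coordinate hyperplanes that could violate genericity or nondegeneracy. My first approach is to import the lemma of Lang--Shapiro--Shustin verbatim for the codimension and degree of the chain, noting that it does not depend on $r$. Genericity and rational direction of $L$ would then handle transversality, and the remaining issues are absorbed by the standing nondegeneracy hypotheses of the theorem.
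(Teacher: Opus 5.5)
Your proposal is correct and is essentially the paper's own argument. Each point of $L\cap\CA_V$ is lifted to a critical point in the fixed pivot chart, and the multiplier-elimination lemma together with the Schur-complement description turns criticality into $c=n-2r+1$ conormal minors; with $\Rea f_j=\Ima f_j=0$ these give $n+1$ equations of degrees $2d_j,2d_j,4D$ on the $(n+1)$-dimensional simple Pfaffian manifold $\Log^{-1}(L)$, Khovanskii's estimate with $q=n-1$, $\mu=2n-1$ yields $B_{\mathrm{CI}}^{\mathrm{one}}$, and the image count is at most the root count because distinct points of $L$ have disjoint $\Log$-fibres. Your Step~3 is exactly what the paper checks in Appendix~D, with the parameter bookkeeping done in Appendix~C.
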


\begin{proof}
On the selected conormal chart, Proposition~\ref{prop:local-minors} replaces the projective multiplier by exactly
$
c=n-2r+1
$
maximal-minor equations. Together with the $2r$ real equations defining $V$, these give $n+1$ polynomial equations on the simple Pfaffian manifold $\Log^{-1}(L)$ of dimension $n+1$.

Khovanskii's Pfaffian root theorem gives
$
2^{q(q-1)/2}
\mathcal P_{\mathrm{CI}}
\left(
\mathcal S_{\mathrm{CI}}+\mu q+1
\right)^q.
$
Substituting
$
q=n-1,
$
$
\mu=2n-1,
$
and the expressions above gives the displayed bound. Every contour image point has at least one critical lift, so the image count is no larger than the number of roots (see Appendix C for more details).
\end{proof}

\subsection{Global chart issue}

The determinantal rank stratum is covered by finitely many pivot charts. If no single pivot minor is nonzero at every relevant critical lift, one must sum the same Pfaffian bound over a finite conormal-chart cover.
The number of possible choices of a nonzero $(2r-1)\times(2r-1)$ minor of an $n\times2r$ matrix is
$$
N_{\mathrm{chart}}
=
\binom{n}{2r-1}\binom{2r}{2r-1}
=
2r\binom{n}{2r-1}.
$$

Hence a completely unconditional chart-summed version is
$$
\Rdeg(\CA_V)
\le
2r\binom{n}{2r-1}
B_{\mathrm{CI}}^{\mathrm{one}}.
$$

This chart-summed version is generally very large. For a fixed finite transverse intersection set, one may often choose generic row and column coordinates so that one pivot chart contains all relevant points. The one-chart theorem is the formulation that has the exact hypersurface specialization.

\subsection{Specialization to hypersurfaces}

Now let
$
r=1.
$
Then
$
D=d,
\, 
c=n-1.
$
The real matrix $\mathcal M_f$ is the $n\times2$ matrix whose columns are
$
\Ima(g)
$
and
$
\Rea(g),
$
where
$
g_i=z_i\dfrac{\partial f}{\partial z_i}.
$
Choose the pivot row $n$ and assume
$
g_n\neq0.
$
The $2\times2$ minor formed by rows $i$ and $n$ is
$$
\det
\begin{pmatrix}
\Ima(g_i)&\Rea(g_i)\\
\Ima(g_n)&\Rea(g_n)
\end{pmatrix}.
$$
This determinant equals
$
\Ima(g_i)\Rea(g_n)-\Rea(g_i)\Ima(g_n).
$
Since
$
\Ima(g_i\overline{g_n})
=
\Ima(g_i)\Rea(g_n)-\Rea(g_i)\Ima(g_n),
$
the conormal-minor equations are exactly
$
\Ima(g_i\overline{g_n})=0,
\,
1\le i\le n-1.
$
The degree product becomes
$
(2d)^2(4d)^{n-1}
=
2^{2n}d^{n+1}.
$

The Pfaffian bracket becomes
$
2(2d-1)
+
(n-1)(4d-1)
+
(n-1)(2n-1)
+
1.
$
A direct simplification gives
$
4dn+2(n-1)^2-1.
$
Therefore Theorem~\ref{thm:CI-LSS} becomes
$$
\Rdeg(\CA_H)
\le
2^{2n+(n-1)(n-2)/2}
d^{n+1}
\left(
4dn+2(n-1)^2-1
\right)^{n-1}.
$$
This is exactly Proposition~6 of Lang, Shapiro, and Shustin \cite{LangShapiroShustin21}. 

\begin{corollary}[Plane-curve specialization]\label{cor:plane}
For
$
n=2
$
and
$
r=1,
$
one has
$$
\Rdeg(\CA_C)
\le
16d^3(8d+1).
$$
\end{corollary}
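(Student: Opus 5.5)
The plan is to obtain Corollary~\ref{cor:plane} by substituting $n=2$ and $r=1$ into the hypersurface specialization of Theorem~\ref{thm:CI-LSS} derived just above, namely
$$
\Rdeg(\CA_H)\le 2^{2n+(n-1)(n-2)/2}\,d^{n+1}\left(4dn+2(n-1)^2-1\right)^{n-1}.
$$

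Before substituting, I will check that the hypotheses of Theorem~\ref{thm:CI-LSS} hold in this setting.
\begin{itemize}
\item The inequality $2r\le n$ reads $2\le 2$, so it holds.
\item The number of conormal-minor equations is $c=n-2r+1=1$, and the pivot block is $1\times1$.
\item The conormal matrix $\mathcal M_f$ is $2\times2$, with columns $\Ima(g)$ and $\Rea(g)$.
\item The single conormal-minor equation is $\Ima(g_1\overline{g_2})=0$, on the chart where $g_2\neq0$ (equivalently, $\Ima g_2$ or $\Rea g_2$ is nonzero).
\end{itemize}

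The one point needing care is the one-chart hypothesis. A smooth curve has $g=(g_1,g_2)\neq0$ at every point. For a generic line $L$, the finitely many critical lifts above $L\cap\CA_C$ avoid the zeros of $g_2$. This can be arranged either by a generic monomial change of coordinates, or by appealing to the chart-cover discussion with $N_{\mathrm{chart}}=2r\binom{n}{2r-1}=4$. I expect this to be the main subtlety, and I would handle it by the genericity argument rather than by chart summation, since summation would inflate the constant. The nondegeneracy of solutions is likewise assumed for generic $L$, as in Lang--Shapiro--Shustin.

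With the hypotheses in place, the computation is as follows.
\begin{itemize}
\item The power of two is $2^{2n+(n-1)(n-2)/2}=2^{4+0}=16$.
\item The degree factor is $d^{n+1}=d^3$.
\item The bracket is $4dn+2(n-1)^2-1=8d+2-1=8d+1$, and it is raised to the power $n-1=1$.
\end{itemize}

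As a cross-check, I will also compute the bracket directly from the unspecialized $\Theta_{\mathrm{CI}}$ with $D=d$:
$$
4d-2+1\cdot(4d-1)+3\cdot1+1=8d+1.
$$
The degree product $\prod_j(2d_j)^2\,(4D)^c=(2d)^2(4d)=16d^3$ agrees with the factor obtained above.

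Multiplying the factors gives $16d^3(8d+1)$. Finally, since $\Rdeg(\CA_C)$ is the maximal number of intersections of the contour with a generic line, the bound on $\#(L\cap\CA_C)$ yields the stated bound on $\Rdeg(\CA_C)$.
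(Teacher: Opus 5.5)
Your proposal is correct and follows the paper's proof, which is the same direct substitution of $n=2$ into the hypersurface specialization of Theorem~\ref{thm:CI-LSS}, giving $2^4d^3(8d+1)$; your cross-check through $\Theta_{\mathrm{CI}}$ with $D=d$ is also consistent. The one-chart issue you flag as the main subtlety does not actually arise here: since $n=2r$, the matrix $\mathcal M_f$ is square, so criticality is globally the single equation $\det\mathcal M_f=\Ima(g_1\overline{g_2})=0$, the same polynomial in every pivot chart, and neither a genericity argument nor chart summation is needed.
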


\begin{proof}
Substituting $n=2$ into the hypersurface formula gives
$
2^{4}d^3
\left(
8d+2-1
\right)
=
16d^3(8d+1).
$
\end{proof}

For a generic affine line, $d=1$, and the bound is
$
\Rdeg(\CA_C)\le144.
$

\subsection{Equal-degree complete-intersection specialization}

If
$
d_j\le d
$
for every $j$, then
$
D\le rd
$
and
$\di
\prod_{j=1}^r(2d_j)^2
\le
(2d)^{2r}.
$
Therefore
$$
B_{\mathrm{CI}}^{\mathrm{one}}
\le
2^{(n-1)(n-2)/2}
(2d)^{2r}
(4rd)^{n-2r+1}
\Theta_{n,r,d}^{\,n-1},
$$
where
$$
\Theta_{n,r,d}
=
4rd-2r
+
(n-2r+1)(4rd-1)
+
(2n-1)(n-1)
+
1.
$$

The multiplier-incidence formulation introduces $2r-1$ auxiliary real variables and $n$ multiplier equations. The conormal-minor formulation eliminates the multiplier and replaces those equations by exactly
$
n-2r+1
$
local determinantal equations. The number of equations then matches the codimension of the determinantal rank stratum.

For $r=1$, the maximal minors are exactly the equations
$
\Ima(g_i\overline{g_n})=0
$
used in the original hypersurface proof. Thus the specialization is exact rather than merely comparable.


\section{Universal LSS-Type Bound when $n<2r$}

Let
$
V=\{f_1=\cdots=f_r=0\}\subset(\C^\ast)^n
$
be a smooth complete intersection of codimension $r$. Its complex dimension is
$
n-r,
$
and therefore its real dimension is
$
2n-2r.
$
Throughout the proof assume
$
n<2r.
$
Then
$
2n-2r<n.
$

Put
$
q=2n-2r
$
and
$
D=\sum_{j=1}^r d_j.
$
The affine testing subspace occurring in the theorem has dimension
$
2r-n
$
and hence codimension
$
q.
$

The theorem to be proved is the following.

\begin{theorem}[Universal LSS-type bound when $n<2r$]\label{thm:lss-ci-low}
Let
$
V=\{f_1=\cdots=f_r=0\}\subset(\C^\ast)^n
$
be a smooth complete intersection of codimension $r$, and assume
$
n<2r.
$
Assume that $\Log(V)$ has its expected real dimension
$
2n-2r.
$
Let
$
A\subset\R^n
$
be a generic affine subspace of dimension
$
2r-n
$
which is transverse to the top-dimensional smooth strata of $\Log(V)$ and avoids the lower-dimensional strata. Assume that
$
V\cap\Log^{-1}(A)
$
is finite and that, after an arbitrarily small generic perturbation preserving the degrees, all lifted intersections are nondegenerate.

Then
$
\#(A\cap\CA_V)
=
\#(A\cap\Log(V))
\le
B_{\mathrm{CI}}^{<},
$
where
$$
B_{\mathrm{CI}}^{<}
=
2^{(2n-2r)(2n-2r-1)/2}
\Big(
\prod_{j=1}^r(2d_j)^2
\Big)
\Theta_{n,r,\mathbf d}^{\,2n-2r},
$$
and
$
\Theta_{n,r,\mathbf d}
=
4D-2r+(2n-1)(2n-2r)+1.
$
Consequently,
$$
\Rdeg_{2n-2r}(\CA_V)
=
\Rdeg_{2n-2r}(\Log(V))
\le
B_{\mathrm{CI}}^{<}.
$$
\end{theorem}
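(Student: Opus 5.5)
The plan follows the proof of Theorem~\ref{thm:CI-LSS}, with one simplification: when $n<2r$ there are no conormal-minor equations at all. The count $\#(A\cap\Log(V))$ is reduced to a count of roots of the $2r$ real defining equations on the simple Pfaffian manifold $\Log^{-1}(A)$, and that count is bounded by Khovanskii's theorem.

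First I would settle the equality $\CA_V=\Log(V)$. When $n<2r$, the real dimension $2n-2r$ of $V$ is less than $n$, so $d\Log|_V$ never has rank $n$. Every point of $V$ is therefore critical, and the set of critical values is all of $\Log(V)$. This gives $\#(A\cap\CA_V)=\#(A\cap\Log(V))$ and the equality of real degrees in the statement.

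Second, I would reduce to counting lifts. By the genericity hypotheses, $A$ meets only the top-dimensional smooth strata, transversally, so every point of $A\cap\Log(V)$ has at least one preimage in $V\cap\Log^{-1}(A)$. Hence $\#(A\cap\Log(V))\le\#\bigl(V\cap\Log^{-1}(A)\bigr)$.

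Third, I would identify the Pfaffian structure. I take $A$ with rational direction, or perturb it to one; the finiteness and nondegeneracy hypotheses are stable under small perturbations. As in the Lang--Shapiro--Shustin construction used above for lines, $\Log^{-1}(A)\subset\R^{2n}$ is then a simple Pfaffian manifold. It is cut out by $q=n-(2r-n)=2n-2r$ affine conditions on the coordinates $\log|z_i|$. These are encoded by one-forms whose polynomial coefficients have degree at most $\mu=2n-1$, and the manifold has dimension $2n-q=2r$.

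Fourth, I would set up the square system. As in the degree-bound subsection, I invert coordinates and multiply each $f_j$ by a Laurent monomial so that $\Rea f_j$ and $\Ima f_j$ become real polynomials of degree at most $2d_j$. This does not change the zero set in the torus. The result is $2r$ equations on a $2r$-dimensional Pfaffian manifold, so the system is square. By the nondegeneracy assumption, after a degree-preserving generic perturbation all roots are isolated and nondegenerate.

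Fifth, I would apply Khovanskii's root bound
\[
2^{q(q-1)/2}\,\mathcal P\,(\mathcal S+\mu q+1)^q,
\]
with
\[
\mathcal P=\prod_{j=1}^r(2d_j)^2,\qquad \mathcal S=2\sum_{j=1}^r(2d_j-1)=4D-2r .
\]
Substituting $q=2n-2r$ and $\mu=2n-1$ gives exactly $B_{\mathrm{CI}}^{<}$, with $\Theta_{n,r,\mathbf d}=4D-2r+(2n-1)(2n-2r)+1$. The real-degree statement then follows by taking the maximum over generic $A$.

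I expect the main obstacle to be the passage from the original configuration to the perturbed nondegenerate one. The lift count must not decrease under the perturbation, and isolated degenerate roots must persist as at least as many nondegenerate roots. I would handle this by the standard semicontinuity argument: a transverse root persists under small perturbation. This uses the transversality of $A$ to the smooth strata together with the finiteness of $V\cap\Log^{-1}(A)$, exactly as in Appendix~C for the case $n\ge 2r$. A secondary point to check is that the Pfaffian chain for $\Log^{-1}(A)$ really has codimension $q$ with coefficient degree $2n-1$ when $A$ has dimension larger than one. For this I would choose $q$ independent integral linear forms vanishing on the direction of $A$, and repeat the Lang--Shapiro--Shustin construction once for each form.
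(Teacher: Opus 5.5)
Apart from one step, your outline is the paper's proof. The shared steps are: $\CA_V=\Log(V)$ because $\dim_{\R}V=2n-2r<n$; the surjection $V\cap\Log^{-1}(A)\to A\cap\Log(V)$; $\Log^{-1}(A)$ as a simple Pfaffian manifold with $q=2n-2r$, $k=2r$, $\mu=2n-1$; the square system $\Rea f_j=\Ima f_j=0$ of degrees at most $2d_j$; and Khovanskii's estimate, whose substitution gives exactly $B_{\mathrm{CI}}^{<}$.

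The detour through a rational direction is unnecessary. The one-form $R\,d\varphi_\nu$ has polynomial coefficients of degree $2n-1$ for arbitrary real $b^{(\nu)}$, which is how the paper builds the chain for $A$. The detour is also not free: comparing the counts over $A$ and over the perturbed subspace needs the same persistence property discussed below.

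The step that fails as written is the one you single out as the main obstacle. ``A transverse root persists under small perturbation'' says nothing about degenerate roots, and those are the only roots for which a perturbation is needed. An isolated degenerate zero can disappear under an arbitrarily small degree-preserving perturbation; see the example $x^2+\varepsilon$ in Appendix~A.

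Transversality of $A$ to the strata of $\Log(V)$ does not rescue this, because it is a condition on the image. A lift $z$ is a nondegenerate zero on $\Gamma_A$ if and only if $d\Log(T_zV)+\vec A=\R^n$. This fails at every lift lying in the source-rank-drop locus $\Sigma_1(V)$, and image-level transversality does not exclude such lifts.

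Appendix~C contains no argument of this kind. The paper makes persistence part of the hypotheses, through the persistence clause of the simple-Pfaffian root estimate and the subsection on the perturbation hypothesis. You can do the same.

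Better, you can dispense with the perturbation altogether. Let $\pi:\R^n\to\R^n/\vec A\cong\R^{2n-2r}$. Then $V\cap\Log^{-1}(A)$ is the fibre of the equidimensional map $\pi\circ\Log|_V:V\to\R^{2n-2r}$ over the point $\pi(A)$. The criterion above says a lift is nondegenerate exactly when it is a regular point of this map. By Sard's theorem, for every direction and almost every translate all lifts are nondegenerate. Khovanskii's bound then applies directly and also yields finiteness.
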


\subsection{Why the ordinary contour is the whole amoeba?}

For every $z\in V$, the differential of the logarithmic map restricted to $V$ is a real linear map
$$
d_z(\Log|_V):T_zV\longrightarrow\R^n.
$$
The real dimension of the source is
$
\dim_{\R}T_zV=2n-2r.
$
Therefore
$
\operatorname{rank}_{\R}d_z(\Log|_V)
\le
2n-2r.
$
Since $n<2r$, one has
$
2n-2r<n.
$
It follows that
$
\operatorname{rank}_{\R}d_z(\Log|_V)<n
$
for every $z\in V$.
Thus every point of $V$ is critical when criticality is defined relative to the target dimension $n$. Consequently,
$
\operatorname{Crit}(\Log|_V)=V
$
and hence
$$
\CA_V
=
\Log\bigl(\operatorname{Crit}(\Log|_V)\bigr)
=
\Log(V).
$$

This proves the equality
$
A\cap\CA_V=A\cap\Log(V)
$
and therefore
$
\#(A\cap\CA_V)=\#(A\cap\Log(V)).
$

\subsection{The logarithmic inverse image of the testing subspace}

Since $A$ has codimension
$
q=2n-2r,
$
there are linearly independent vectors
$
b^{(1)},\ldots,b^{(q)}\in\R^n
$
and constants
$
c_1,\ldots,c_q\in\R
$
such that
$$
A
=
\left\{
u\in\R^n:
\langle b^{(\nu)},u\rangle=c_\nu,
\quad
1\le\nu\le q
\right\}.
$$

Write
$
z_i=x_i+iy_i.
$
For
$
1\le\nu\le q,
$
define
$
\varphi_\nu(z)
=
\sum_{i=1}^n
b_i^{(\nu)}\log|z_i|-c_\nu.
$
Then
$\di
\Log^{-1}(A)
=
\left\{
z\in(\C^\ast)^n:
\varphi_1(z)=\cdots=\varphi_q(z)=0
\right\}.
$
The differential of $\varphi_\nu$ is
$$
d\varphi_\nu
=
\sum_{i=1}^n
b_i^{(\nu)}
\frac{x_i\,dx_i+y_i\,dy_i}{x_i^2+y_i^2}.
$$
Let
$\di
R(x,y)
=
\prod_{j=1}^n(x_j^2+y_j^2).
$
The function $R$ is strictly positive on $(\C^\ast)^n$. Define the polynomial one-form
$
\alpha_\nu
=
R(x,y)d\varphi_\nu.
$
Explicitly,
$$
\alpha_\nu
=
\sum_{i=1}^n
b_i^{(\nu)}
(x_i\,dx_i+y_i\,dy_i)
\prod_{j\ne i}(x_j^2+y_j^2).
$$
Every coefficient of $\alpha_\nu$ has total degree
$
1+2(n-1)=2n-1.
$
The differential
$
d\Log_z:T_z(\C^\ast)^n\to\R^n
$
is surjective. Indeed, for any
$
v=(v_1,\ldots,v_n)\in\R^n,
$
the tangent vector defined by
$
\dot x_i=v_ix_i
$
and
$
\dot y_i=v_iy_i
$
satisfies
$
d\Log_z(\dot x,\dot y)=v.
$
Since the vectors
$
b^{(1)},\ldots,b^{(q)}
$
are linearly independent, the differentials
$
d\varphi_1,\ldots,d\varphi_q
$
are linearly independent at every point. Hence $\Log^{-1}(A)$ is a smooth real-analytic submanifold of codimension $q$ in $\R^{2n}$. Its dimension is
$
2n-q
=
2n-(2n-2r)
=
2r.
$

The successive level sets
$
\varphi_\nu=0
$
are separating integral hypersurfaces of the polynomial one-forms $\alpha_\nu$. Therefore
$
\Gamma_A:=\Log^{-1}(A)
$
is a simple Pfaffian submanifold of codimension $q$, dimension $2r$, and Pfaffian coefficient degree at most
$
\mu=2n-1.
$

\subsection{The square system on $\Gamma_A$}

A point of
$
V\cap\Gamma_A
$
is characterized on $\Gamma_A$ by
$$
\Rea f_j=0,
\qquad
\Ima f_j=0,
\qquad
1\le j\le r.
$$
There are exactly $2r$ real equations, and
$
\dim_{\R}\Gamma_A=2r.
$
Thus the restricted system is square.

The Lang--Shapiro--Shustin normalization consists of applying coordinate inversions in the algebraic torus and multiplying each Laurent equation by a Laurent monomial. These operations do not alter the zero set inside $(\C^\ast)^n$. After this normalization, the real and imaginary parts of the $j$-th equation have total degree at most
$
2d_j.
$
Accordingly,
$$
\deg(\Rea f_j)\le2d_j,
\qquad
\deg(\Ima f_j)\le2d_j.
$$

Let
$
p_{2j-1}=\deg(\Rea f_j)
$
and
$
p_{2j}=\deg(\Ima f_j).
$
Then
$
p_{2j-1}\le2d_j,
\, 
p_{2j}\le2d_j.
$
Thus
$\di
\prod_{\ell=1}^{2r}p_\ell
\le
\prod_{j=1}^r(2d_j)^2.
$
Moreover,
$\di
\sum_{\ell=1}^{2r}(p_\ell-1)
\le
\sum_{j=1}^r
\bigl((2d_j-1)+(2d_j-1)\bigr).
$
Hence
$\di
\sum_{\ell=1}^{2r}(p_\ell-1)
\le
4D-2r.
$

\subsection{The simple-Pfaffian root estimate}

We use the following form of Khovanskii's root estimate (see Appendix A for its proof with details).

\begin{lemma}[Simple-Pfaffian root estimate]\label{lem:pfaffian}
Let $\Gamma\subset\R^M$ be a simple Pfaffian submanifold of codimension $q$ defined by polynomial one-forms whose coefficient degrees are at most $\mu$. Assume
$
\dim_{\R}\Gamma=k.
$
Let
$
P_1,\ldots,P_k
$
be real polynomials of degrees
$
p_1,\ldots,p_k
$
whose restrictions to $\Gamma$ have only isolated nondegenerate common zeros. Then the number of those zeros is at most
$$
2^{q(q-1)/2}
\Big(
\prod_{\ell=1}^k p_\ell
\Big)
\Big(
\sum_{\ell=1}^k(p_\ell-1)+\mu q+1
\Big)^q.
$$
The same expression bounds a finite degenerate zero set after an arbitrarily small generic perturbation preserving the degrees, provided the perturbation is chosen so that every original isolated zero contributes at least one nearby zero counted with multiplicity.
\end{lemma}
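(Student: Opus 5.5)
I would prove the estimate by Khovanskii's Rolle-type induction on the codimension $q$, that is, on the number of Pfaffian one-forms. Write $\Gamma=\Gamma_q\subset\Gamma_{q-1}\subset\cdots\subset\Gamma_0=\R^M$. Here $\Gamma_\nu$ is the separating integral submanifold of $\alpha_\nu$ restricted to $\Gamma_{\nu-1}$, so that $\dim\Gamma_\nu=M-\nu$ and $k=M-q$.

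\emph{Base case $q=0$.} Here $\Gamma$ is an open subset of $\R^M$ and $k=M$. Nondegenerate real common zeros of $P_1,\dots,P_M$ are isolated complex zeros. Bézout's theorem then bounds them by $\prod p_\ell$, which matches the formula with $q=0$ since the bracket is raised to the power $0$.

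\emph{Inductive step.} I will reduce the problem on $\Gamma_q$ to problems on $\Gamma_{q-1}$, which has dimension $k+1$.

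1. After a generic perturbation, the equations $P_1=\cdots=P_k=0$ cut out on $\Gamma_{q-1}$ a smooth real curve $C$. The zeros we want to count are exactly the points of $C\cap\Gamma_q$.
2. Since the hypersurface $\Gamma_q$ separates $\Gamma_{q-1}$, the Khovanskii--Rolle lemma applies. Between two consecutive intersection points on a component of $C$ there is a point where $\alpha_q|_C=0$. Hence
$$\#(C\cap\Gamma_q)\le \#\{\text{noncompact components of }C\}+\#\{x\in C:\alpha_q|_{T_xC}=0\}.$$
3. The tangency condition $\alpha_q|_{T_xC}=0$ is one extra polynomial equation $Q$. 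Concretely, $Q$ is the determinant of $dP_1,\dots,dP_k$, the forms $\alpha_1,\dots,\alpha_q$, and suitable coordinate covectors. Its degree is at most $\sum(p_\ell-1)+\mu q$.
4. The system $P_1,\dots,P_k,Q$ is then a square system on $\Gamma_{q-1}$, and the induction hypothesis applies to it.
5. The noncompact branches are bounded in the same way. I would intersect $C$ with a generic large sphere or affine hyperplane, which adds one equation of degree at most $2$, and again apply the induction hypothesis.

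\emph{Bookkeeping.} Adding the two contributions, the new equation's degree contributes a factor bounded by $\sum(p_\ell-1)+\mu q+1$. The two-term estimate at each level accounts for the extra factor $2^{q-1}$. Iterating over the $q$ levels gives $2^{0+1+\cdots+(q-1)}=2^{q(q-1)/2}$ and the bracket raised to the power $q$. The bracket only grows along the induction, so I would bound it uniformly by its top-level value.

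\emph{Main obstacle.} I expect the delicate step to be this bookkeeping: checking that each inductive level contributes exactly one bracket factor, and controlling the noncompact components without losing more than the factor $2^{q-1}$ per step. Here I would follow Khovanskii's original argument closely, as restated by Lang--Shapiro--Shustin, rather than reoptimizing the constants.

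\emph{Perturbation clause.} Nondegenerate zeros persist under small perturbations by the implicit function theorem. An isolated degenerate zero of positive multiplicity splits, under a generic degree-preserving perturbation, into at least one nearby nondegenerate zero. The count for the perturbed system therefore bounds the original finite zero set, which gives the final sentence of the lemma.
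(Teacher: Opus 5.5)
Your route is the one the paper relies on. The paper does not reprove the nondegenerate count: it cites Khovanskii (\emph{Fewnomials}, \S3.12) and proves only the perturbative clause. Since you re-derive Khovanskii's bound yourself, the bookkeeping step you flag as delicate must be done correctly, and as you describe it, it fails.

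Put $S=\sum_\ell(p_\ell-1)$ and $T=S+\mu q+1$. Because $k+q=M$, the tangency polynomial is simply $Q=\det(dP_1,\dots,dP_k,\alpha_1,\dots,\alpha_q)$, of degree at most $T-1$. No coordinate covectors are needed; adding any would make the matrix non-square. Feed $(P_1,\dots,P_k,Q)$ into the induction hypothesis on $\Gamma_{q-1}$. The degree-minus-one sum becomes $S+T-2$, so the inner bracket is $2T-2-\mu$. The brackets roughly double at each descent, so bounding them by the top-level value $T$ is false. This doubling is exactly what produces the constant:
\[
\#\{\text{tangency points}\}\le 2^{(q-1)(q-2)/2}\Big(\prod_\ell p_\ell\Big)(T-1)\bigl(2(T-1)\bigr)^{q-1}=2^{q(q-1)/2}\Big(\prod_\ell p_\ell\Big)(T-1)^{q}.
\]
The noncompact term is of lower order. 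With the extra equation $|x|^2-R^2$, the inner bracket is $S+\mu(q-1)+2\le T$ (for $\mu\ge1$) and the degree product is $2\prod_\ell p_\ell$. Hence the number of noncompact branches is at most $2^{(q-1)(q-2)/2}\prod_\ell p_\ell\,T^{q-1}$, and $(T-1)^q+T^{q-1}\le T^q$ closes the induction.

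So the per-level factor $2^{q-1}$ comes from $\bigl(2(T-1)\bigr)^{q-1}$, not from the two-term Rolle estimate. Read as a factor $2$ per level, that estimate would give only $2^q$.

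There are two further problems in the inductive step. First, an affine hyperplane cannot replace the sphere: a noncompact branch can run off to infinity inside a half-space without meeting it. The sphere works because $\Gamma_{q-1}$ is closed in $\R^M$, so every noncompact component is unbounded at both ends. Second, the induction hypothesis only counts nondegenerate zeros. Your general-position step must therefore also make the tangency points and the sphere intersections nondegenerate, not merely make $C$ smooth.

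In the perturbation clause, you assert that an isolated degenerate zero automatically produces a nearby nondegenerate zero under a generic real perturbation. This is false, as the paper points out: $x^2$ has an isolated zero, while $x^2+\varepsilon$ has none for $\varepsilon>0$. The lemma makes persistence a hypothesis, and the paper's argument uses it directly. Take pairwise disjoint neighbourhoods of the finitely many original zeros. The hypothesis gives at least one perturbed zero in each, and these are distinct, so $\#Z\le\#Z_\varepsilon$. Then apply the nondegenerate bound to $Z_\varepsilon$.
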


The last sentence is the standard perturbative interpretation needed in the theorem. Since the original lifted intersection is finite, one may choose disjoint small neighborhoods of its points. A sufficiently small generic perturbation has at least one zero in each neighborhood whenever the local intersection multiplicity is positive. The total number of original isolated points is therefore no larger than the number of perturbed nondegenerate zeros counted by the Pfaffian estimate.

\subsection{Application of the root estimate}

Apply Lemma~\ref{lem:pfaffian} to
$
\Gamma=\Gamma_A.
$
Here
$$
q=2n-2r,
\qquad
k=2r,
\qquad
\mu=2n-1.
$$
The product of the polynomial degrees is bounded by
$\di
\prod_{j=1}^r(2d_j)^2.
$
The degree-minus-one sum is bounded by
$
4D-2r.
$
Therefore the number of isolated lifted points in
$
V\cap\Log^{-1}(A)
$
is at most
$$
2^{q(q-1)/2}
\Big(
\prod_{j=1}^r(2d_j)^2
\Big)
\left(
4D-2r+(2n-1)q+1
\right)^q.
$$


Substituting $q=2n-2r$ into the previous estimate yields
$$
\#\bigl(V\cap\Log^{-1}(A)\bigr)
\le
2^{(2n-2r)(2n-2r-1)/2}
\Big(
\prod_{j=1}^r(2d_j)^2
\Big)
\left(
4D-2r+(2n-1)(2n-2r)+1
\right)^{2n-2r}.
$$
By the definition of the universal constant, the right-hand side is precisely
$
B_{\mathrm{CI}}^{<}.
$

\subsection{Passage from lifted points to logarithmic image points}

The restriction
$
\Log:
V\cap\Log^{-1}(A)
\longrightarrow
A\cap\Log(V)
$
is surjective. Indeed, if
$
x\in A\cap\Log(V),
$
then by definition there exists
$
z\in V
$
such that
$
\Log(z)=x.
$
Since $x\in A$, this point $z$ lies in
$
\Log^{-1}(A).
$

Consequently,
$$
\#\bigl(A\cap\Log(V)\bigr)
\le
\#\bigl(V\cap\Log^{-1}(A)\bigr).
$$
Distinct lifted points can have the same logarithmic image, so equality is not required.
Combining this inequality with the Pfaffian bound gives
$
\#\bigl(A\cap\Log(V)\bigr)
\le
B_{\mathrm{CI}}^{<}.
$
Since
$
\CA_V=\Log(V),
$
one obtains
$$
\#(A\cap\CA_V)
=
\#(A\cap\Log(V))
\le
B_{\mathrm{CI}}^{<}.
$$

\subsection*{Proof of Theorem~\ref{thm:lss-ci-low}}

\begin{proof}
Because
$
\dim_{\R}V=2n-2r<n,
$
the differential of $\Log|_V$ has rank strictly smaller than $n$ at every point. Hence every point is critical and
$
\CA_V=\Log(V).
$
The generic affine testing subspace $A$ has codimension
$
q=2n-2r.
$
Its logarithmic inverse image
$
\Gamma_A=\Log^{-1}(A)
$
is a simple Pfaffian submanifold of $\R^{2n}$ of codimension $q$, dimension $2r$, and coefficient degree at most $2n-1$.
On $\Gamma_A$, the equations
$
\Rea f_j=0
$
and
$
\Ima f_j=0
$
form a square system of $2r$ equations. After the standard toric degree normalization, their degrees are bounded by $2d_j$. Hence their degree product is bounded by
$
\prod_j(2d_j)^2,
$
and their degree-minus-one sum is bounded by
$
4D-2r.
$
Applying the simple-Pfaffian root estimate, together with the permitted generic perturbation, yields
$$
\#\bigl(V\cap\Log^{-1}(A)\bigr)
\le
B_{\mathrm{CI}}^{<}.
$$
The logarithmic map sends this finite lift set surjectively onto
$
A\cap\Log(V).
$
Therefore
$$
\#\bigl(A\cap\Log(V)\bigr)
\le
B_{\mathrm{CI}}^{<}.
$$
Since
$
\CA_V=\Log(V),
$
this proves
$
\#(A\cap\CA_V)
=
\#(A\cap\Log(V))
\le
B_{\mathrm{CI}}^{<}.
$
Finally, the same constant applies to every generic affine testing subspace of dimension
$
2r-n,
$
because it depends only on $n$, $r$, and the degrees $d_j$. Taking the supremum over all such generic affine subspaces gives
$$
\Rdeg_{2n-2r}(\CA_V)
=
\Rdeg_{2n-2r}(\Log(V))
\le
B_{\mathrm{CI}}^{<}.
$$
\end{proof}

\subsection{Why the dimension hypotheses are necessary?}

The expected-dimension hypothesis\\
$
\dim_{\R}\Log(V)=2n-2r
$
ensures that affine subspaces of dimension
$
2r-n
$
are complementary to the top-dimensional amoeba strata. Generic transversality then makes the image intersection finite.
The assumption that $A$ avoids lower-dimensional strata ensures that the counted image points lie on the top-dimensional part of the amoeba and that no positive-dimensional or singular intersection component is introduced by the test.
The explicit assumption that
$
V\cap\Log^{-1}(A)
$
is finite is stronger than finiteness of the image intersection. It rules out the possibility that a single logarithmic image has a positive-dimensional phase fiber inside $V$. This finiteness is required for the zero-counting theorem.

\subsection{The perturbation hypothesis}

The Pfaffian root estimate is stated most cleanly for nondegenerate zeros. If the original lifted system has isolated degenerate zeros, one perturbs its coefficients generically without increasing the degree bounds.
To justify the estimate for the original cardinality, choose pairwise disjoint neighborhoods of the finitely many lifted zeros. The perturbation is required to preserve at least one nearby zero in each neighborhood, counted through the positive local intersection multiplicity. Under this hypothesis, the number of original isolated points does not exceed the total number of perturbed nondegenerate zeros. This is exactly the perturbation assumption included in the theorem.


\subsection{Why no conormal minors appear?}

In the regime
$
2r\le n,
$
the map $\Log|_V$ may have full rank $n$, and the contour is the critical-value hypersurface cut out by logarithmic conormal equations.

In the regime
$
2r>n,
$
the source has real dimension
$
2n-2r<n.
$
The map can never have rank $n$, so every point is critical in the ordinary sense. Therefore the standard contour equals the whole amoeba, and no logarithmic conormal equation is required.
Adding conormal-minor equations would impose the stronger condition
$$
\operatorname{rank}_{\R}(d\Log|_{T_zV})<2n-2r,
$$
which defines the maximal-source-rank-drop locus rather than the standard contour.

\begin{proposition}\label{prop:corrected-concise}
Let
$
V\subset(\C^\ast)^n
$
be a smooth complete intersection of codimension $r$, and assume
$
2r>n.
$
Then every point of $V$ is critical for $\Log|_V$ relative to the target dimension, and therefore
$
\CA_V=\Log(V).
$
The maximal-source-rank-drop contour is
$$
\CA_V^{\mathrm{mr}}
=
\Log\left\{
z\in V:
\rank_{\R}d_z(\Log|_V)<2n-2r
\right\},
$$
and one always has
$
\CA_V^{\mathrm{mr}}\subseteq\CA_V.
$
The inclusion is strict if and only if there exists a full-source-rank point whose logarithmic value is not attained by any maximal-source-rank-drop point.
\end{proposition}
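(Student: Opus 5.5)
The plan is to prove the three assertions of Proposition~\ref{prop:corrected-concise} in turn, each from elementary linear algebra and set-theoretic manipulations of images.

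\textbf{Step 1: every point is critical.} This repeats the argument of the subsection explaining why the ordinary contour is the whole amoeba. For $z\in V$ the differential $d_z(\Log|_V)$ has source $T_zV$ of real dimension $2n-2r$. Its rank is therefore at most $2n-2r$, which is strictly less than $n$ because $2r>n$. Hence $\Crit(\Log|_V)=V$, and taking images under $\Log$ gives $\CA_V=\Log(V)$.

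\textbf{Step 2: the inclusion.} Put $W=\{z\in V:\rank_{\R}d_z(\Log|_V)<2n-2r\}$. Then $W\subseteq V=\Crit(\Log|_V)$, so applying $\Log$ gives $\CA_V^{\mathrm{mr}}=\Log(W)\subseteq\Log(V)=\CA_V$. By semicontinuity of rank, $W$ is a closed real-analytic (indeed semialgebraic in $(x,y)$) subset of $V$. This is not needed for the inclusion itself.

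\textbf{Step 3: the strictness criterion.} I would prove both directions directly.
\begin{itemize}
\item If the inclusion is strict, choose $x\in\Log(V)\setminus\Log(W)$ and any $z\in V$ with $\Log(z)=x$. Then $z\notin W$, so $z$ has full source rank $2n-2r$. Since $x\notin\Log(W)$, no maximal-source-rank-drop point has logarithmic value $x$.
\item Conversely, if some full-source-rank point $z$ has $\Log(z)$ not attained on $W$, then $\Log(z)\in\CA_V\setminus\CA_V^{\mathrm{mr}}$, so the inclusion is strict.
\end{itemize}

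The only subtlety is the wording ``full-source-rank point whose logarithmic value is not attained''. It must be read as $\Log(z)\notin\Log(W)$, not merely $z\notin W$, since a value can be hit by both kinds of points. With that reading the argument is a tautology and there is no real obstacle. The main care point is to stress that the contour is defined via critical values, so strictness concerns images rather than points of $V$.
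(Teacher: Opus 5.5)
Your proposal is correct and follows essentially the same route as the paper. The paper's justification is the dimension count $\rank_{\R} d_z(\Log|_V)\le \dim_{\R}T_zV=2n-2r<n$ from the subsection on why the ordinary contour is the whole amoeba; the inclusion $\CA_V^{\mathrm{mr}}\subseteq\CA_V$ and the strictness criterion then follow immediately from set-theoretic properties of images, exactly as in your Steps~2 and~3, and your reading of ``not attained'' as $\Log(z)\notin\Log(W)$ is the right one.
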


\subsection{Equal-degree specialization}

Assume
$
d_j\le d
$
for every $j$. Then
$
D\le rd
$
and
$$
\prod_{j=1}^r(2d_j)^2
\le
(2d)^{2r}.
$$
Hence
$
B_{\mathrm{CI}}^{\mathrm{amoeba}}
\le
2^{(2n-2r)(2n-2r-1)/2}
(2d)^{2r}
\Theta_{n,r,d}^{\,2n-2r},
$
where
$
\Theta_{n,r,d}
=
4rd-2r
+
(2n-1)(2n-2r)
+
1.
$

\subsection{Codimension-two curves in $(\C^\ast)^3$}

Take
$
n=3
$
and
$
r=2.
$
Then
$
2r=4>3=n,
$
and the amoeba has expected real dimension $2$. It is tested by affine lines.
Here
$
q=2n-2r=2.
$
The universal bound becomes
$$
B_{3,2}^{\mathrm{amoeba}}
=
32d_1^2d_2^2
\left(
4(d_1+d_2)+7
\right)^2.
$$

Indeed,
$
\Theta_{\mathrm{amoeba}}
=
4(d_1+d_2)-4+5\cdot2+1
=
4(d_1+d_2)+7.
$
If
$
d_1=d_2=1,
$
then
$$
B_{3,2}^{\mathrm{amoeba}}
=
32\cdot15^2
=
7200.
$$

\begin{corollary}[Codimension-two curve in $(\C^\ast)^3$]
Let
$
V\subset(\C^\ast)^3
$
be a smooth codimension-two complete-intersection curve defined by equations of degrees $d_1,d_2$. Under the hypotheses of Theorem~\ref{thm:CI-LSS-low},
$$
\Rdeg_2(\Log(V))
\le
32d_1^2d_2^2
\left(
4(d_1+d_2)+7
\right)^2.
$$
\end{corollary}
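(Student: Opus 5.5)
This corollary is the specialization $n=3$, $r=2$ of Theorem~\ref{thm:lss-ci-low}. (The reference to ``Theorem~\ref{thm:CI-LSS-low}'' in the statement should be read as that theorem.) The plan is to first check that the pair $(n,r)=(3,2)$ lies in the regime of that theorem, and then evaluate the universal constant $B_{\mathrm{CI}}^{<}$ explicitly.

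\emph{Regime check.} Since $2r=4>3=n$, we are in the case $n<2r$. The amoeba has expected real dimension $2n-2r=2$, so the testing spaces are affine subspaces of dimension $2r-n=1$, that is, generic affine lines. By Proposition~\ref{prop:corrected-concise}, $\CA_V=\Log(V)$. Hence $\Rdeg_2(\Log(V))$ is the supremum of $\#(L\cap\Log(V))$ over generic lines $L$ satisfying the transversality, finiteness and perturbation hypotheses.

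\emph{Evaluating the constant.} With $q=2n-2r=2$, the exponent of $2$ is $q(q-1)/2=1$. The degree product is $(2d_1)^2(2d_2)^2=16d_1^2d_2^2$, so the prefactor is $2\cdot16\,d_1^2d_2^2=32d_1^2d_2^2$. The bracket is
\[
\Theta_{3,2,\mathbf d}=4D-2r+(2n-1)q+1=4(d_1+d_2)-4+10+1=4(d_1+d_2)+7 .
\]
It is raised to the power $q=2$. This gives exactly $32d_1^2d_2^2\bigl(4(d_1+d_2)+7\bigr)^2$, and taking the supremum over admissible lines yields the claim.

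\emph{Possible obstacle.} The only point needing care is the bookkeeping inherited from Theorem~\ref{thm:lss-ci-low}. One must confirm that $\Log^{-1}(L)$ is a simple Pfaffian manifold of dimension $2r=4$ in $\R^6$, with codimension $2$ and one-form coefficient degree $\mu=5$. This matches the square system of four equations $\Rea f_j=\Ima f_j=0$. Once that is in place, the computation is purely arithmetic. As a sanity check, $d_1=d_2=1$ gives $32\cdot15^2=7200$, which agrees with the value computed just before the statement.
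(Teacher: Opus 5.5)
Your proposal is correct and follows the paper's own argument: the corollary is the specialization $n=3$, $r=2$ of Theorem~\ref{thm:lss-ci-low}, giving $q=2n-2r=2$, the prefactor $2\cdot 16d_1^2d_2^2=32d_1^2d_2^2$, and the bracket $4(d_1+d_2)-4+5\cdot 2+1=4(d_1+d_2)+7$ raised to the power $2$. You are also right that the unresolved reference in the statement should be read as Theorem~\ref{thm:lss-ci-low}, since the paper contains no theorem under the cited label.
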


\subsection{Codimension-three curves in $(\C^\ast)^4$}

Take
$
n=4
$
and
$
r=3.
$
Then
$
2r=6>4,
$
and the amoeba has expected real dimension $2$. It is tested by affine planes.

Now
$
q=2n-2r=2.
$
Hence
$
B_{4,3}^{\mathrm{amoeba}}
=
128d_1^2d_2^2d_3^2(4D+9)^2.
$
Indeed,
$
\Theta_{\mathrm{amoeba}}
=
4D-6+7\cdot2+1
=
4D+9.
$

\subsection{Relation with the Lang--Shapiro--Shustin method}

The proof uses the same structural ingredients as the Lang--Shapiro--Shustin argument. A generic rational testing affine subspace is chosen, its logarithmic inverse image is realized as a simple Pfaffian manifold by one-forms with polynomial coefficients of degree $2n-1$, the defining equations of the variety are restricted to that Pfaffian manifold, and Khovanskii's Pfaffian root estimate is applied with explicit degree data.

The geometric difference is that in the hypersurface-contour regime one adds equations defining logarithmic criticality. In the regime $2r>n$, ordinary criticality is automatic, so the defining equations of the complete intersection alone form the required square system.


\begin{remark}
The bound is universal and uses only total degrees. A sparse mixed-volume estimate based on the exact transformed Newton polytopes may be much smaller for a fixed rational testing direction.
\end{remark}


\section{The Maximal-Source-Rank-Drop Contour in the Regime $2r>n$}

Let
$
V=\{f_1=\cdots=f_r=0\}\subset(\C^\ast)^n
$
be a smooth complete intersection of codimension $r$ and complex dimension
$
m=n-r.
$
Assume
$
2r>n.
$
Then
$
2m=2n-2r<n.
$

For the ordinary target-rank definition, every point of $V$ is critical because the real dimension of the source is smaller than the dimension of the target. Thus the standard contour is the whole amoeba:
$
\CA_V=\Log(V).
$
The maximal-source-rank-drop locus is a different object. It consists of the points at which the logarithmic differential has rank strictly smaller than its maximal possible rank $2m$:
$$
\Sigma_1(V)
=
\left\{
z\in V:
\rank_{\R}\bigl(d\Log|_{T_zV}\bigr)\le 2m-1
\right\}.
$$
Its image
$
\CA_V^{\mathrm{mr}}
=
\Log\bigl(\Sigma_1(V)\bigr)
$
is called the {\em maximal-source-rank-drop contour}.
More generally, for an integer $s\ge1$, define
$$
\Sigma_s(V)
=
\left\{
z\in V:
\dim_{\R}\Ker\bigl(d\Log|_{T_zV}\bigr)\ge s
\right\}.
$$
The exact kernel-dimension stratum is
$
\Sigma_s^\circ(V)
=
\Sigma_s(V)\setminus\Sigma_{s+1}(V).
$

\subsection{The real logarithmic Jacobian}

Define the complex logarithmic Jacobian
$$
A_f(z)
=
\left(
z_i\frac{\partial f_j}{\partial z_i}(z)
\right)_
{\substack{1\le j\le r\\1\le i\le n}}.
$$
Write
$
A_f(z)=U_f(z)+iV_f(z),
$
with $U_f(z)$ and $V_f(z)$ real $r\times n$ matrices, and define the real $2r\times n$ matrix
$\di
A_f(z)_{\R}
=
\begin{pmatrix}
U_f(z)\\
V_f(z)
\end{pmatrix}.
$

\begin{lemma}\label{lem:kernel-log} %
For every $z\in V$,
$
\Ker_{\R}\bigl(d\Log|_{T_zV}\bigr)
\simeq
\Ker_{\R}\bigl(A_f(z)_{\R}\bigr).
$
Consequently,
$$
\Sigma_s(V)
=
\left\{
z\in V:
\rank_{\R}A_f(z)_{\R}\le n-s
\right\}.
$$
\end{lemma}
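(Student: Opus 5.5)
The plan is to work entirely in the logarithmic tangent coordinates of the preliminaries. There each tangent vector at $z\in(\C^\ast)^n$ is written as $\xi_i=z_i\eta_i$ with $\eta\in\C^n$, and $T_zV=\ker_{\C}A_f(z)$ (this is the identification $T_zV\simeq\ker_{\C}G_f(z)$, with $A_f=G_f$). The first step is to compute $d\Log$ in these coordinates. Since $\log|z_i|=\Rea\log z_i$ and $d\log z_i=dz_i/z_i=\eta_i$, we get
\[
d_z\Log(\eta)=\Rea\eta\in\R^n .
\]
This can be checked directly from $\Log(z)_i=\tfrac12\log(x_i^2+y_i^2)$.

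The second step identifies the kernel. A vector $\eta\in T_zV$ lies in $\Ker\bigl(d\Log|_{T_zV}\bigr)$ exactly when $\Rea\eta=0$, that is, $\eta=it$ with $t\in\R^n$, and in addition $A_f(z)(it)=0$. Because $A_f(z)$ is $\C$-linear, the condition $A_f(z)(it)=0$ is equivalent to $A_f(z)t=0$. Write $A_f=U_f+iV_f$. For a real vector $t$, the real and imaginary parts of $A_f(z)t$ are $U_f(z)t$ and $V_f(z)t$, so the condition becomes $A_f(z)_{\R}\,t=0$. Hence the real-linear map
\[
t\longmapsto z\cdot(it),\qquad\text{i.e. } \xi_i=iz_it_i ,
\]
is an injective map $\Ker_{\R}A_f(z)_{\R}\to\Ker\bigl(d\Log|_{T_zV}\bigr)$, and by the previous sentence it is also surjective. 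This gives the claimed isomorphism.

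The third step deduces the description of $\Sigma_s(V)$ by rank–nullity applied to the real $2r\times n$ matrix $A_f(z)_{\R}$:
\[
\dim_{\R}\Ker\bigl(d\Log|_{T_zV}\bigr)=n-\rank_{\R}A_f(z)_{\R}.
\]
Therefore the condition $\dim_{\R}\Ker\ge s$ is equivalent to $\rank_{\R}A_f(z)_{\R}\le n-s$.

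There is no real obstacle in this argument. The only point needing care is the first step: checking that in these coordinates $d\Log$ is exactly the real part, with no extra factors. Indeed, $d\log|z_i|=\Rea(dz_i/z_i)$, which is what makes $d\Log$ correspond to $\Rea\eta$. Smoothness of $V$ is used only to guarantee $T_zV=\ker_{\C}A_f(z)$.
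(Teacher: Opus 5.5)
Your proposal is correct and follows the paper's proof essentially step for step: logarithmic tangent coordinates $\dot z_i=z_i\eta_i$, the identity $d\Log_z=\Rea\eta$, kernel vectors of the form $\eta=iy$ with $y\in\R^n$, and splitting $A_f(z)y=0$ into $U_f(z)y=0$ and $V_f(z)y=0$. Your explicit rank--nullity step for the description of $\Sigma_s(V)$ only spells out what the paper leaves implicit.
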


\begin{proof}
Use logarithmic tangent coordinates
$
\dot z_i=z_i\eta_i.
$
The tangent condition is
$
A_f(z)\eta=0,
$
and
$
d\Log_z(\dot z)=\Rea\eta.
$
A tangent vector belongs to the kernel of $d\Log$ precisely when
$
\eta=iy
$
for some $y\in\R^n$. The tangent condition becomes
$
A_f(z)y=0.
$
Separating real and imaginary parts gives
$
U_f(z)y=0,
\,
V_f(z)y=0,
$
which is equivalent to
$
A_f(z)_{\R}y=0.
$
\end{proof}

Since $2r>n$, a generic real $2r\times n$ matrix has rank $n$. Thus the maximal possible rank of $d\Log|_{T_zV}$ is $2m$, and a source-rank drop occurs exactly when the real logarithmic Jacobian acquires a nonzero kernel.

\subsection{Determinantal strata and their expected dimensions}

Let
$$
\mathcal D_s
=
\left\{
M\in M_{2r,n}(\R):
\rank M\le n-s
\right\}.
$$
Its smooth rank stratum is
$$
\mathcal D_s^\circ
=
\left\{
M\in M_{2r,n}(\R):
\rank M=n-s
\right\}.
$$
The codimension of $\mathcal D_s^\circ$ in $M_{2r,n}(\R)$ is
$
s(2r-n+s).
$
Let
$
\Phi_f:V\longrightarrow M_{2r,n}(\R),
\,
\Phi_f(z)=A_f(z)_{\R}.
$

\begin{proposition}\label{prop:expected-dim} %
Assume that $\Phi_f$ is transverse to every smooth determinantal stratum $\mathcal D_s^\circ$. Then $\Sigma_s^\circ(V)$ is a smooth real-analytic manifold of dimension
$
d_s
=
2m-s(2r-n+s).
$
Since $m=n-r$, this is
$
d_s
=
2n-2r-s(2r-n+s).
$
If $d_s<0$, then $\Sigma_s^\circ(V)$ is empty for a generic complete intersection.
\end{proposition}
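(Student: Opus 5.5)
The plan is the standard transversality argument for determinantal loci, built on Lemma~\ref{lem:kernel-log}. By that lemma,
$$
\Sigma_s(V)=\Phi_f^{-1}(\mathcal D_s),
$$
and hence
$$
\Sigma_s^\circ(V)=\Sigma_s(V)\setminus\Sigma_{s+1}(V)=\Phi_f^{-1}(\mathcal D_s^\circ),
$$
since matrices of rank exactly $n-s$ are precisely those with kernel of dimension exactly $s$. So the statement reduces to the preimage of a smooth stratum under a transverse map. I will treat $V$ as a real-analytic manifold of dimension $2m$. The map $\Phi_f$ is real-analytic, indeed polynomial in the real coordinates of $z$.

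The first step is to recall that $\mathcal D_s^\circ$ is a smooth submanifold of $M_{2r,n}(\R)$ of codimension $s(2r-n+s)$. I would verify this by the same pivot-chart and Schur-complement device used for Proposition~\ref{prop:local-minors}. Near a matrix of rank $k=n-s$, fix an invertible $k\times k$ block $P$ and write
$$
M=\begin{pmatrix} P&Q\\ R&S\end{pmatrix}.
$$
Then $\rank M=k$ if and only if $S-RP^{-1}Q=0$. This is $(2r-k)(n-k)=(2r-n+s)s$ independent equations, because the map $M\mapsto S-RP^{-1}Q$ is a submersion: its derivative in the $S$-direction is the identity. The zero set is therefore a smooth real-analytic submanifold of the stated codimension.

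The second step applies the transversal preimage theorem. If $\Phi_f\pitchfork\mathcal D_s^\circ$, then $\Phi_f^{-1}(\mathcal D_s^\circ)$ is either empty or a smooth submanifold of $V$ with the same codimension. Its dimension is therefore
$$
2m-s(2r-n+s)=2n-2r-s(2r-n+s).
$$
Real-analyticity holds because the local defining functions, the entries of the Schur complement composed with $\Phi_f$, are real-analytic and have independent differentials.

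The last assertion concerns $d_s<0$. Transversality at a point $z\in\Sigma_s^\circ(V)$ would require
$$
d\Phi_f(T_zV)+T\mathcal D_s^\circ=TM_{2r,n}(\R),
$$
and this is impossible when $\dim V<\operatorname{codim}\mathcal D_s^\circ$. So under the transversality hypothesis the stratum is empty. Transversality itself holds for a generic complete intersection, and the expected difficulty lies here. I would argue by a parametric transversality theorem, perturbing the coefficients of $f_1,\dots,f_r$ within their fixed supports. The task is to show that the universal map
$$
(z,c)\mapsto A_{f_c}(z)_{\R}
$$
on the incidence variety $\{(z,c):f_c(z)=0\}$ is a submersion onto $M_{2r,n}(\R)$. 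Adding to $f_j$ linear combinations of the monomials $z^\alpha-z_0^\alpha$, which vanish at the chosen point $z_0$, changes the row $z_i\partial_i f_j$ by arbitrary complex vectors $(\alpha_i z_0^\alpha)_i$. Provided the supports contain enough exponents, for instance all of $\{0,e_1,\dots,e_n\}$, these span $\C^n$, so $U_f$ and $V_f$ can be moved independently. Sard's theorem then gives transversality for generic coefficients.
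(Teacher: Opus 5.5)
Your proposal is correct and follows the paper's argument: you identify $\Sigma_s^\circ(V)=\Phi_f^{-1}(\mathcal D_s^\circ)$ via Lemma~\ref{lem:kernel-log} and apply the transversal preimage theorem with $\operatorname{codim}\mathcal D_s^\circ=s(2r-n+s)$, and your emptiness argument for $d_s<0$ (a transverse preimage cannot have negative dimension) is the same as in Corollary~\ref{cor:generic-empty}. The additional material you supply is sound and fills in points the paper takes for granted: the Schur-complement proof that $\mathcal D_s^\circ$ is smooth of that codimension, and the parametric-transversality argument showing that the hypothesis holds for generic coefficients when the supports contain $\{0,e_1,\dots,e_n\}$.
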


\begin{proof}
By Lemma~\ref{lem:kernel-log},
$
\Sigma_s^\circ(V)=\Phi_f^{-1}(\mathcal D_s^\circ).
$
The transversality hypothesis and the Preimage Theorem give
$
\dim_{\R}\Sigma_s^\circ(V)
=
\dim_{\R}V-\operatorname{codim}\mathcal D_s^\circ.
$
Since
$
\dim_{\R}V=2m
$
and
$
\operatorname{codim}\mathcal D_s^\circ=s(2r-n+s),
$
the formula follows.
\end{proof}

For the maximal-source-rank-drop stratum $s=1$,
$
d_1
=
2n-2r-(2r-n+1)
=
3n-4r-1.
$

\subsection{Grassmann kernel incidence}

Let
$
\Gr(s,n)
$
be the Grassmannian of real $s$-planes in $\R^n$. Define the incidence space
$$
\mathcal I_s
=
\left\{
(z,K)\in V\times\Gr(s,n):
K\subset\Ker A_f(z)_{\R}
\right\}.
$$
Over $\Sigma_s^\circ(V)$, the kernel has dimension exactly $s$, so the projection
$
\pi_s:\mathcal I_s\longrightarrow\Sigma_s(V)
$
is one-to-one over $\Sigma_s^\circ(V)$.
Choose a standard affine chart of $\Gr(s,n)$. After choosing a pivot set
$
I\subset\{1,\ldots,n\},
$
$
|I|=s,
$
an $s$-plane is represented by an $n\times s$ matrix
$\di
Y_I(U)
=
\begin{pmatrix}
I_s\\
U
\end{pmatrix},
$
after reordering coordinates, where
$
U
$
has $s(n-s)$ real entries.
The kernel-incidence equations are
$
A_f(z)_{\R}Y_I(U)=0.
$
This gives exactly
$
2rs
$
real polynomial equations.
The Grassmannian has
$
\binom ns
$
standard affine charts.

\subsection{Testing dimension}

Assume
$
d_s\ge0.
$
The image
$
\Log(\Sigma_s^\circ(V))
$
has real dimension at most $d_s$. Under the generic finite-image hypothesis, its top-dimensional part has dimension $d_s$. Its real degree is therefore tested by generic affine subspaces
$
A\subset\R^n
$
of dimension
$
n-d_s.
$
Equivalently, $A$ has codimension
$
d_s.
$
For the maximal-source-rank-drop stratum,
$
n-d_1
=
4r-2n+1.
$

\subsection{The Pfaffian manifold}

Let
$
A\subset\R^n
$
be a generic affine subspace of dimension $n-d_s$. As in the method of Lang, Shapiro, and Shustin,
$
\Log^{-1}(A)
$
is a simple Pfaffian submanifold of $\R^{2n}$ of codimension $d_s$. Its defining Pfaffian one-forms have polynomial coefficients of degree at most
$
\mu=2n-1.
$
In one Grassmann chart, set
$
\Gamma_{A,I}
=
\Log^{-1}(A)\times\R^{s(n-s)}.
$
Then
$
\dim_{\R}\Gamma_{A,I}
=
2n-d_s+s(n-s).
$
Using the formula for $d_s$, one obtains
$
2n-d_s+s(n-s)
=
2r+2rs.
$

On $\Gamma_{A,I}$ impose the $2r$ real defining equations
$
\Rea(f_j)=0,
\,
\Ima(f_j)=0,
\,
1\le j\le r,
$
and the $2rs$ kernel-incidence equations
$
A_f(z)_{\R}Y_I(U)=0.
$
Thus the number of polynomial equations is exactly
$
2r+2rs
=
\dim_{\R}\Gamma_{A,I}.
$
The system is square.

\subsection{Degree bounds}

As in the Lang--Shapiro--Shustin normalization, after coordinate inversions and multiplication by suitable Laurent monomials, each $f_j$ may be assumed to have degree at most
$
2d_j.
$
Hence
$
\deg\Rea(f_j),\deg\Ima(f_j)\le2d_j.
$
The entries of the two real rows of $A_f(z)_{\R}$ associated with $f_j$ have degree at most
$
2d_j.
$
Multiplication by the Grassmann-chart variables increases the total degree by at most one. Therefore each of the $2s$ kernel-incidence equations associated with $f_j$ has degree at most
$
2d_j+1.
$
The product of all polynomial degrees is
$\di
\mathcal P_s
=
\prod_{j=1}^r
(2d_j)^2(2d_j+1)^{2s}.
$
The sum of all degree-minus-one terms is
$\di
\mathcal S_s
=
\sum_{j=1}^r
\left(
2(2d_j-1)+2s((2d_j+1)-1)
\right).
$
Thus
$
\mathcal S_s
=
4(s+1)D-2r,
$
where
$
D=\sum_{j=1}^r d_j.
$
Define
$
\Theta_s
=
4(s+1)D-2r
+
(2n-1)d_s
+
1.
$

\begin{theorem}\label{thm:grassmann-LSS-s} %
Let
$
V\subset(\C^\ast)^n
$
be a smooth complete intersection of codimension $r$ with $2r>n$. Fix $s\ge1$ and assume
$
d_s
=
2n-2r-s(2r-n+s)
\ge0.
$
Assume that the real logarithmic Jacobian map is transverse to the smooth determinantal rank strata. Assume that the top-dimensional part of
$
\Log(\Sigma_s^\circ(V))
$
has dimension $d_s$ and that a generic affine testing subspace
$
A\subset\R^n
$
of dimension $n-d_s$ avoids the images of the deeper strata $\Sigma_{s+1}(V)$.

Assume also that, in every standard Grassmann chart, the resulting square incidence system has only isolated nondegenerate solutions after an arbitrarily small generic perturbation preserving the degrees.
Then
$
\#\left(
A\cap\Log(\Sigma_s^\circ(V))
\right)
\le
B_s^{\mathrm{Gr}},
$
where
$$
B_s^{\mathrm{Gr}}
=
\binom ns
2^{d_s(d_s-1)/2}
\Big(
\prod_{j=1}^r
(2d_j)^2(2d_j+1)^{2s}
\Big)
\Theta_s^{\,d_s}
$$
and
$
\Theta_s
=
4(s+1)D-2r+(2n-1)d_s+1.
$
Consequently,
$
\Rdeg_{d_s}
\Big(
\Log(\Sigma_s^\circ(V))
\Big)
\le
B_s^{\mathrm{Gr}}.
$
\end{theorem}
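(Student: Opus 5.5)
The plan follows the template of the proof of Theorem~\ref{thm:lss-ci-low}: reduce the image count to a lifted count, and bound the lifted count chart by chart with Lemma~\ref{lem:pfaffian}. The extra ingredient is the Grassmann incidence, which turns the rank-drop condition into polynomial equations.

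\textbf{Step 1: pass from image points to lifted incidence points.} Fix a generic affine subspace $A$ of dimension $n-d_s$ as in the statement. Every point $x\in A\cap\Log(\Sigma_s^\circ(V))$ has a preimage $z\in\Sigma_s^\circ(V)\cap\Log^{-1}(A)$. By Lemma~\ref{lem:kernel-log}, $\Ker A_f(z)_{\R}$ has dimension exactly $s$, so it is a single point $K\in\Gr(s,n)$. Hence $(z,K)$ is the unique point of $\mathcal I_s$ over $z$.

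\textbf{Step 2: choose a chart and write the square system.} The $\binom ns$ standard charts cover $\Gr(s,n)$, so $K=\operatorname{col}Y_I(U)$ for some pivot set $I$ and some $U\in\R^{s(n-s)}$. Thus $(z,U)$ is a common zero, on $\Gamma_{A,I}=\Log^{-1}(A)\times\R^{s(n-s)}$, of the $2r$ equations $\Rea f_j=\Ima f_j=0$ and the $2rs$ equations $A_f(z)_{\R}Y_I(U)=0$. It follows that
$$
\#\bigl(A\cap\Log(\Sigma_s^\circ(V))\bigr)\le\sum_I\#Z_I,
$$
where $Z_I$ is the zero set of this system in chart $I$. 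Points of $\Sigma_{s+1}(V)$ would produce positive-dimensional families of $U$. They are excluded because $A$ avoids $\Log(\Sigma_{s+1}(V))$, so only $\Sigma_s^\circ$ points occur.

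\textbf{Step 3: apply the Pfaffian estimate in each chart.} First I need $\Gamma_{A,I}$ to be a simple Pfaffian manifold of codimension $q=d_s$ with coefficient degree $\mu=2n-1$. This holds because $\Log^{-1}(A)$ is one, by the same computation with $\alpha_\nu=R\,d\varphi_\nu$ as in the $n<2r$ section, and taking a product with a Euclidean factor preserves this structure with the same one-forms. The dimension count $2n-d_s+s(n-s)=2r+2rs$ from the text shows the system is square. For the degrees, I use the normalization of $f_j$ and its logarithmic rows to degree $\le 2d_j$, together with the fact that the incidence equations are bilinear in the entries and $U$. This gives degree product $\mathcal P_s$ and degree-minus-one sum $\mathcal S_s=4(s+1)D-2r$. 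Lemma~\ref{lem:pfaffian}, including its perturbation clause, then gives
$$
\#Z_I\le 2^{d_s(d_s-1)/2}\,\mathcal P_s\,\Theta_s^{d_s}.
$$
Summing over the $\binom ns$ charts yields $B_s^{\mathrm{Gr}}$. Since the constant is independent of $A$, taking the supremum over $A$ gives the bound on $\Rdeg_{d_s}$.

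\textbf{Main obstacle.} The hardest point is the degree normalization. Inverting coordinates and multiplying by Laurent monomials changes the logarithmic Jacobian rows. On $V$ the change is only a nonzero rescaling of the row $g_j$, since the extra term is a multiple of $f_j$. But this is a complex scalar, so it mixes the two real rows $\Rea g_j$ and $\Ima g_j$ by an invertible real $2\times2$ transformation. I would check that this preserves $\Ker A_f(z)_{\R}$ pointwise on $V$, so that the incidence zero set on $V$ is unchanged. I would then verify that the normalized entries really have degree $\le 2d_j$, so that the incidence equations have degree $\le 2d_j+1$.

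A second delicate point is that $d_s=0$ is allowed. In that case $A=\R^n$, $\Gamma_{A,I}$ has codimension zero, and the estimate reduces to Bézout's bound $\mathcal P_s$, which matches the formula with $\Theta_s^0=1$.
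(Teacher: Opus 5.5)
Your proposal is correct and follows essentially the same argument as the paper. You lift each image point to the unique kernel plane in the Grassmann incidence space, using avoidance of $\Log(\Sigma_{s+1}(V))$. You then apply Khovanskii's estimate on $\Gamma_{A,I}=\Log^{-1}(A)\times\R^{s(n-s)}$ with $q=d_s$ and $\mu=2n-1$ in each of the $\binom ns$ charts, and sum over charts. Your extra checks also match the paper's own reasoning and are somewhat more explicit than its proof: on $V$ the monomial normalization only rescales each logarithmic row by a nonzero complex factor, hence preserves $\Ker A_f(z)_{\R}$, and the case $d_s=0$ reduces to a B\'ezout count.
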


\begin{proof}
Fix one standard affine chart of $\Gr(s,n)$. The product
$
\Gamma_{A,I}
=
\Log^{-1}(A)\times\R^{s(n-s)}
$
is a simple Pfaffian manifold of codimension $d_s$, dimension $2r+2rs$, and Pfaffian coefficient degree at most $2n-1$.
The $2r$ defining equations and the $2rs$ kernel-incidence equations form a square system on $\Gamma_{A,I}$. Their degree product is $\mathcal P_s$, and their degree-minus-one sum is $\mathcal S_s$.
Khovanskii's Pfaffian root estimate gives, in one Grassmann chart,
$$
2^{d_s(d_s-1)/2}
\mathcal P_s
\left(
\mathcal S_s+(2n-1)d_s+1
\right)^{d_s}.
$$
Substituting the formulas for $\mathcal P_s$ and $\mathcal S_s$ gives the one-chart bound.
There are $ \binom ns$ standard Grassmann charts. Summing over all charts gives $B_s^{\mathrm{Gr}}$.
Every point of
$
A\cap\Log(\Sigma_s^\circ(V))
$
has at least one lift to the Grassmann incidence space. Since $A$ avoids the image of $\Sigma_{s+1}(V)$, the kernel dimension is exactly $s$ at every relevant point, so the Grassmann fiber consists of one $s$-plane. Different charts may represent the same plane, and distinct points of $V$ may have the same logarithmic image, so the number of image points is no larger than the chart-summed number of incidence solutions.
\end{proof}

\subsection{The maximal-source-rank-drop theorem}

Set
$
s=1.
$
Then
$
d_1=3n-4r-1.
$
The testing affine subspaces have dimension
$
n-d_1=4r-2n+1.
$
The degree product is
$
\mathcal P_1
=
\prod_{j=1}^r
(2d_j)^2(2d_j+1)^2,
$
and
$
\Theta_1
=
8D-2r+(2n-1)(3n-4r-1)+1.
$

\begin{theorem}\label{thm:maximal-source-rank-drop}  %
Let
$
V\subset(\C^\ast)^n
$
be a smooth complete intersection of codimension $r$ with
$
2r>n.
$
Assume
$
d_{\mathrm{mr}}
=
3n-4r-1
\ge0.
$
Assume that the real logarithmic Jacobian map is transverse to the smooth rank strata, that
$
\CA_V^{\mathrm{mr}}
=
\Log(\Sigma_1(V))
$
has top-dimensional part of dimension $d_{\mathrm{mr}}$, and that a generic affine subspace
$
A\subset\R^n
$
of dimension
$
4r-2n+1
$
avoids the image of $\Sigma_2(V)$.
Assume that the Grassmann kernel-incidence systems in the $n$ standard charts of $\Gr(1,n)=\mathbb RP^{n-1}$ have only isolated nondegenerate solutions after a generic degree-preserving perturbation.
Then
$$
\#\left(
A\cap\CA_V^{\mathrm{mr}}
\right)
\le
B_{\mathrm{mr}}^{\mathrm{Gr}},
$$
where
$$
B_{\mathrm{mr}}^{\mathrm{Gr}}
=
n\,
2^{d_{\mathrm{mr}}(d_{\mathrm{mr}}-1)/2}
\Big(
\prod_{j=1}^r
(2d_j)^2(2d_j+1)^2
\Big)
\Theta_{\mathrm{mr}}^{\,d_{\mathrm{mr}}}
$$
and
$
\Theta_{\mathrm{mr}}
=
8D-2r
+
(2n-1)d_{\mathrm{mr}}
+
1.
$
Consequently,
$
\Rdeg_{d_{\mathrm{mr}}}
\Big(
\CA_V^{\mathrm{mr}}
\Big)
\le
B_{\mathrm{mr}}^{\mathrm{Gr}}.
$
\end{theorem}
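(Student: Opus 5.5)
The plan is to obtain Theorem~\ref{thm:maximal-source-rank-drop} as the case $s=1$ of Theorem~\ref{thm:grassmann-LSS-s}, and then to check that $\Sigma_1^\circ(V)$ may replace $\Sigma_1(V)$. The proof has three steps.

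First, specialize every quantity to $s=1$. Then $\Gr(1,n)=\mathbb RP^{n-1}$, which has $\binom n1=n$ standard affine charts. The expected dimension is
$$d_1=2n-2r-(2r-n+1)=3n-4r-1=d_{\mathrm{mr}},$$
so the testing affine subspaces have dimension $n-d_1=4r-2n+1$. This matches the hypothesis on $A$. From $\mathcal S_s=4(s+1)D-2r$ we get $\mathcal S_1=8D-2r$, hence $\Theta_1=8D-2r+(2n-1)d_{\mathrm{mr}}+1=\Theta_{\mathrm{mr}}$. The degree product $\mathcal P_1=\prod_j(2d_j)^2(2d_j+1)^2$ is the displayed one. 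Substituting these values into $B_s^{\mathrm{Gr}}$ gives exactly $B_{\mathrm{mr}}^{\mathrm{Gr}}$. I would also recheck the dimension count of the square system in this case:
$$\dim\Gamma_{A,I}=2n-d_1+(n-1)=2r+2r,$$
which equals the $2r$ defining equations plus the $2r$ kernel-incidence equations $A_f(z)_{\R}Y_I=0$.

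Second, check that the hypotheses of Theorem~\ref{thm:grassmann-LSS-s} hold for $s=1$. Transversality to the rank strata, the top dimension $d_{\mathrm{mr}}$, and the perturbative nondegeneracy of the incidence systems in the $n$ charts are all assumed directly. The remaining hypothesis is that $A$ avoids the image of the deeper stratum $\Sigma_{s+1}(V)=\Sigma_2(V)$; this is exactly the stated assumption that $A$ avoids $\Log(\Sigma_2(V))$. The top-dimensionality hypothesis is stated for $\Log(\Sigma_1(V))$ rather than for $\Log(\Sigma_1^\circ(V))$. Since $\Sigma_1^\circ(V)$ is open in $\Sigma_1(V)$ and differs from it only by $\Sigma_2(V)$, whose image $A$ avoids, the two hypotheses agree where it matters.

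Third, pass from $\Sigma_1^\circ$ to $\CA_V^{\mathrm{mr}}$. By definition $\Log(\Sigma_1(V))=\Log(\Sigma_1^\circ(V))\cup\Log(\Sigma_2(V))$, and $A\cap\Log(\Sigma_2(V))=\emptyset$. Therefore
$$A\cap\CA_V^{\mathrm{mr}}=A\cap\Log(\Sigma_1^\circ(V)),$$
and Theorem~\ref{thm:grassmann-LSS-s} bounds the count. The bound depends only on $n$, $r$ and the $d_j$, so taking the supremum over generic $A$ gives the estimate for $\Rdeg_{d_{\mathrm{mr}}}$.

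The main obstacle is the third step, together with the boundary case $d_{\mathrm{mr}}=0$. At points of $\Sigma_2(V)$ the Grassmann fiber is positive-dimensional, so the incidence system would not have isolated solutions there; this is why avoiding $\Log(\Sigma_2(V))$ must be used to discard those points. When $d_{\mathrm{mr}}=0$ there is no Pfaffian constraint: $\Log^{-1}(A)=(\C^\ast)^n$, $q=0$, and Khovanskii's estimate reduces to the Bézout-type count $\mathcal P_1$ in each chart. This agrees with the formula, since $2^{0}\Theta^{0}=1$.
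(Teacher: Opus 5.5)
Your proposal is correct and follows the paper's proof, which specializes Theorem~\ref{thm:grassmann-LSS-s} to $s=1$ and checks that the chart count, testing dimension, degree product and Pfaffian bracket reduce to the displayed constants. You also make explicit the identity $A\cap\Log(\Sigma_1(V))=A\cap\Log(\Sigma_1^\circ(V))$, which follows from the avoidance of $\Log(\Sigma_2(V))$; the paper leaves this step implicit, even though the general theorem counts points of $\Log(\Sigma_s^\circ(V))$ rather than $\Log(\Sigma_s(V))$.
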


\begin{proof}
This is Theorem~\ref{thm:grassmann-LSS-s} with $s=1$. Since
$
\Gr(1,n)=\mathbb RP^{n-1},
$
there are $n$ standard affine charts. The expected dimension, testing dimension, degree product, and Pfaffian bracket simplify exactly as displayed.
\end{proof}

\begin{corollary}\label{cor:generic-empty}  %
If
$
3n-4r-1<0,
$
then the maximal-source-rank-drop locus is empty for a generic smooth complete intersection satisfying the transversality hypothesis.
\end{corollary}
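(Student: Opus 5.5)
The plan is to reduce the corollary to the expected-dimension count of Proposition~\ref{prop:expected-dim} with $s=1$. By definition, $\Sigma_1(V)$ is the set of $z\in V$ where $\dim_{\R}\Ker(d\Log|_{T_zV})\ge1$. Since each $\Sigma_s(V)$ contains $\Sigma_{s+1}(V)$, we get the stratification
$$
\Sigma_1(V)=\bigsqcup_{s\ge1}\Sigma_s^\circ(V),
$$
and it is a finite union because the kernel dimension is at most $n$. So it suffices to show that every $\Sigma_s^\circ(V)$ with $s\ge1$ is empty under the transversality hypothesis.

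The key step is to evaluate $d_s=2n-2r-s(2r-n+s)$ for all $s\ge1$. We compute
$$
d_{s+1}-d_s=-(2r-n+2s+1).
$$
Because $2r>n$, this difference is negative, so $d_s$ is strictly decreasing in $s$. Hence $d_s\le d_1=3n-4r-1<0$ for every $s\ge1$.

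Now apply Proposition~\ref{prop:expected-dim}. By Lemma~\ref{lem:kernel-log}, $\Sigma_s^\circ(V)=\Phi_f^{-1}(\mathcal D_s^\circ)$. Transversality of $\Phi_f$ to $\mathcal D_s^\circ$ means that at any point $z$ of the preimage, the codimension of $\Sigma_s^\circ(V)$ in $V$ equals $\operatorname{codim}\mathcal D_s^\circ=s(2r-n+s)$. This codimension exceeds $\dim_{\R}V=2n-2r$, which is exactly the statement $d_s<0$. A transverse map into a submanifold whose codimension exceeds the dimension of the source must miss it entirely, since $d\Phi_f$ cannot surject onto the normal space. So $\Sigma_s^\circ(V)=\emptyset$ for every $s\ge1$, and therefore $\Sigma_1(V)=\emptyset$ and $\CA_V^{\mathrm{mr}}=\emptyset$.

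I expect the main obstacle to be the word ``generic''. One has to justify that a generic smooth complete intersection actually satisfies the transversality hypothesis; otherwise the corollary is just a restatement of it. I would first try to treat this as part of the hypothesis, as the statement allows. To strengthen it, I would consider the family of $\Phi_f$ as the coefficients of the $f_j$ vary and apply a parametric transversality (Thom) argument. This requires the evaluation map $(z,\text{coeffs})\mapsto A_f(z)_{\R}$ to be a submersion onto $M_{2r,n}(\R)$. That holds once the supports of the $f_j$ contain enough monomials, because $z_i\partial_i$ of a monomial $z^\alpha$ is $\alpha_iz^\alpha$, so the entries can be varied independently. The empty-preimage conclusion then holds for coefficients in a dense set whose complement has measure zero.
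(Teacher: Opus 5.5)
Your proof is correct and follows the paper's argument: the expected dimension is negative, and a map transverse to a stratum whose codimension exceeds $\dim_{\R}V$ cannot meet it, so the preimage is empty. Your computation $d_{s+1}-d_s=-(2r-n+2s+1)<0$ usefully makes explicit that the deeper strata $\Sigma_s^\circ(V)$ with $s\ge2$ are also empty, a step the paper's one-line proof leaves implicit since it only invokes $d_1=3n-4r-1$.
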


\begin{proof}
The expected dimension is negative. A transverse inverse image of the corresponding smooth determinantal stratum cannot have negative dimension.
\end{proof}

\subsection{Codimension-two curves in $(\C^\ast)^3$}

Take
$
n=3,
\,
r=2.
$
Then
$
d_{\mathrm{mr}}
=
3\cdot3-4\cdot2-1
=
0.
$
Thus the maximal-source-rank-drop contour is expected to be zero-dimensional. The testing affine subspace is all of $\R^3$.
The bound becomes
$$
B_{\mathrm{mr}}^{\mathrm{Gr}}
=
3
\prod_{j=1}^2
(2d_j)^2(2d_j+1)^2.
$$
There is no Pfaffian exponential or bracket contribution because
$
d_{\mathrm{mr}}=0.
$
If
$
d_1=d_2=1,
$
then
$$
B_{\mathrm{mr}}^{\mathrm{Gr}}
=
3(4\cdot9)^2
=
3888.
$$

\begin{corollary}\label{cor:n3r2}  %
Let
$
V\subset(\C^\ast)^3
$
be a smooth codimension-two complete-intersection curve. Under the hypotheses of Theorem~\ref{thm:maximal-source-rank-drop}, the number of distinct logarithmic images of maximal-source-rank-drop points satisfies
$$
\#\CA_V^{\mathrm{mr}}
\le
3(2d_1)^2(2d_1+1)^2(2d_2)^2(2d_2+1)^2.
$$
\end{corollary}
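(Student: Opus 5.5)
This is the case $n=3$, $r=2$ of Theorem~\ref{thm:maximal-source-rank-drop}, so the plan is to specialize that theorem and check that every quantity degenerates correctly when the expected dimension is zero. First compute $d_{\mathrm{mr}}=3n-4r-1=9-8-1=0$. The testing subspace then has dimension $4r-2n+1=3$, so $A=\R^3$ is forced. The hypothesis that $A$ avoids $\Log(\Sigma_2(V))$ becomes the requirement that $\Sigma_2(V)$ is empty. This is consistent with Proposition~\ref{prop:expected-dim}: there $d_2=2-2\cdot 3=-4<0$. The conclusion of the theorem then reads $\#(\R^3\cap\CA_V^{\mathrm{mr}})=\#\CA_V^{\mathrm{mr}}\le B_{\mathrm{mr}}^{\mathrm{Gr}}$.

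Next, evaluate $B_{\mathrm{mr}}^{\mathrm{Gr}}$ at $d_{\mathrm{mr}}=0$. The power $2^{d_{\mathrm{mr}}(d_{\mathrm{mr}}-1)/2}$ equals $1$. The factor $\Theta_{\mathrm{mr}}^{\,0}$ also equals $1$, since $\Theta_{\mathrm{mr}}=8D-4+1>0$ is nonzero. The chart count is $n=3$. This leaves
$$3\prod_{j=1}^2(2d_j)^2(2d_j+1)^2,$$
which is the displayed bound.

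The point that needs care is the meaning of the Pfaffian estimate when the Pfaffian codimension is $q=d_{\mathrm{mr}}=0$. Here $\Log^{-1}(\R^3)=(\C^\ast)^3\subset\R^6$ is open, and no Pfaffian one-forms are present. In the chart $\Gamma_{A,I}=(\C^\ast)^3\times\R^2$ of dimension $2r+2rs=8$, the square system consists of the $4$ equations $\Rea f_j=\Ima f_j=0$ together with the $4$ kernel-incidence equations $A_f(z)_{\R}Y_I(U)=0$. Lemma~\ref{lem:pfaffian} with $q=0$ should then reduce to the real B\'ezout bound $\prod p_\ell$ on nondegenerate real solutions of a square polynomial system. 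If one prefers not to rely on the $q=0$ reading of the lemma, one can cite B\'ezout's theorem directly on $\R^8$: nondegenerate real roots are among the isolated complex roots, and the degree product bounds those. The degrees are at most $2d_j$ for the two defining equations attached to $f_j$ and at most $2d_j+1$ for the two incidence equations attached to $f_j$, which gives the product above in each chart.

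Finally, the passage from chart solutions to image points follows the proof of Theorem~\ref{thm:grassmann-LSS-s}. Since $\Sigma_2(V)=\emptyset$, every point of $\Sigma_1(V)$ has a one-dimensional kernel, so it lifts to a unique line in $\mathbb{RP}^2$. That line lies in at least one of the $3$ standard charts. The map from lifted solutions to $\Log$-images is surjective. Summing over the three charts yields the corollary. The only genuinely delicate step is the $q=0$ degeneration; the rest is substitution.
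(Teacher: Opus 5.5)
Your proposal is correct and follows the paper's argument: the corollary is Theorem~\ref{thm:maximal-source-rank-drop} with $n=3$, $r=2$, where $d_{\mathrm{mr}}=0$ forces $A=\R^3$ and makes $2^{d_{\mathrm{mr}}(d_{\mathrm{mr}}-1)/2}$ and $\Theta_{\mathrm{mr}}^{d_{\mathrm{mr}}}$ equal to $1$, leaving the chart count $3$ times the degree product. Your justification of the $q=0$ case by B\'ezout on $\R^8$ spells out what the paper compresses into its remark that there is no Pfaffian exponential or bracket contribution. Incidentally, $\Sigma_2(V)=\emptyset$ holds automatically here, because the real kernel lies inside the one-dimensional complex logarithmic tangent line.
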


\subsection{Comparison with the ordinary contour}

In the regime $2r>n$, the ordinary contour is
$
\CA_V=\Log(V),
$
and its degree is tested by affine subspaces of dimension $2r-n$.

The maximal-source-rank-drop contour is usually smaller. Its expected dimension is
$
3n-4r-1,
$
and its degree is tested by affine subspaces of dimension
$
4r-2n+1.
$
The two theorems therefore estimate different geometric objects.

\begin{remark}
The factor $\binom ns$ is a chart-overcount factor. If one Grassmann chart contains all relevant kernel planes, it may be omitted.
\end{remark}

\begin{remark}
The degree bound $2d_j+1$ for the kernel equations is a total-degree majorant. Exact transformed supports can yield much smaller mixed-volume bounds.
\end{remark}

\begin{remark}
If the generic testing affine subspace meets the image of a deeper stratum, the Grassmann fiber becomes positive-dimensional and the incidence system may cease to be zero-dimensional. This is why the theorem explicitly requires avoidance of $\Log(\Sigma_{s+1}(V))$.
\end{remark}


\section*{Appendix A: Complete Proof of the Simple-Pfaffian Root Estimate}


%

The estimate below is the simple-Pfaffian B\'ezout theorem of Khovanskii specialized to polynomial equations on a simple Pfaffian submanifold. The proof makes this specialization explicit and then proves the perturbative extension.

\subsection*{The simple Pfaffian structure}

A simple Pfaffian submanifold of codimension $q$ is given by a chain
$
\R^M=\Gamma_0\supset\Gamma_1\supset\cdots\supset\Gamma_q=\Gamma
$
and polynomial one-forms
$
\omega_1,\ldots,\omega_q
$
such that $\Gamma_j$ is a separating integral hypersurface of the restriction of $\omega_j$ to $\Gamma_{j-1}$. Write
$$
\omega_j
=
\sum_{\nu=1}^M a_{j\nu}(x)\,dx_\nu,
$$
where
$
\deg a_{j\nu}\leq\mu.
$
Because every inclusion has codimension one,
$
\dim_{\R}\Gamma=M-q=k.
$
A common zero $x\in\Gamma$ of the restrictions of
$
P_1,\ldots,P_k
$
is nondegenerate when
$
d(P_1|_\Gamma)_x,\ldots,d(P_k|_\Gamma)_x
$
are linearly independent in $T_x^\ast\Gamma$. Since the number of equations equals $\dim\Gamma$, every nondegenerate zero is isolated.

\subsection*{Khovanskii's simple-Pfaffian B\'ezout theorem}

The foundational result is the following.

\begin{theorem}[Khovanskii]\label{thm:khovanskii}
Let
$
\R^M=\Gamma_0\supset\Gamma_1\supset\cdots\supset\Gamma_q
$
be a simple Pfaffian chain defined by polynomial one-forms whose coefficient degrees are at most $\mu$, and put
$
k=M-q.
$
Let $P_1,\ldots,P_k$ be real polynomials of degrees $p_1,\ldots,p_k$. If their restrictions to $\Gamma_q$ have only isolated nondegenerate common zeros, then their number is at most
$$
2^{q(q-1)/2}
\left(
\prod_{\ell=1}^k p_\ell
\right)
\left(
\sum_{\ell=1}^k(p_\ell-1)+\mu q+1
\right)^q.
$$
\end{theorem}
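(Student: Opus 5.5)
The plan is the classical Khovanskii induction along the Pfaffian chain, via Rolle's lemma for separating solutions. We induct on $q$. For $q=0$ we have $\Gamma_0=\R^M$, $k=M$, and the statement is the real B\'ezout bound. The number of nondegenerate real common zeros of $P_1,\ldots,P_M$ is at most $\prod p_\ell$. This holds because nondegenerate real zeros are isolated complex zeros of multiplicity one, and B\'ezout's theorem bounds the number of isolated complex zeros. The bracket factor is then $(\cdots)^0=1$.

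For the inductive step, put $\Gamma=\Gamma_q$, a separating integral hypersurface of $\omega_q|_{\Gamma_{q-1}}$, with $\dim\Gamma_{q-1}=k+1$. Consider the curve
\[
\gamma=\{x\in\Gamma_{q-1}: P_1=\cdots=P_k=0\}.
\]
After a small generic perturbation of the constant terms, chosen so as not to destroy the nondegenerate zeros on $\Gamma$, $\gamma$ is a smooth one-dimensional submanifold of $\Gamma_{q-1}$. The zeros we want to count are exactly the points of $\gamma\cap\Gamma$. The key tool is Khovanskii's Rolle lemma for separating solutions. Between two consecutive intersection points of a connected component of $\gamma$ with the separating hypersurface $\Gamma$, there is a point where $\omega_q$ vanishes on the tangent line of $\gamma$. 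Counting components as well, one obtains
\[
\#(\gamma\cap\Gamma)\le \#\{\text{noncompact components of }\gamma\}+\#\{x\in\gamma:\ \omega_q(T_x\gamma)=0\}.
\]

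The tangency condition is polynomial. Writing the restriction to $\Gamma_{q-1}$ through the lower forms $\omega_1,\ldots,\omega_{q-1}$, it is the determinant $\det(dP_1,\ldots,dP_k,\omega_1,\ldots,\omega_q)$ paired against a suitable $M$-frame. Concretely, one takes the $M\times M$ minor system obtained by adjoining coordinate covectors, or generically a fixed linear combination. This determinant is a polynomial $Q$ of degree at most $\sum(p_\ell-1)+\mu q$. Thus the points with $\omega_q(T_x\gamma)=0$ are the common zeros on $\Gamma_{q-1}$ of the $k+1$ polynomials $P_1,\ldots,P_k,Q$. We apply the induction hypothesis to this chain of length $q-1$. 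The new degree sum is $\sum(p_\ell-1)+(\deg Q-1)\le 2\sum(p_\ell-1)+\mu q-1$, and the product is $\prod p_\ell\cdot(\sum(p_\ell-1)+\mu q)$. The noncompact components are handled by compactifying or by intersecting with a large generic sphere. That count is bounded by a B\'ezout-type bound on $\Gamma_{q-1}$ for the system with one added quadratic equation, which is dominated by the same expression with $\deg Q$ replaced by the bracket. Assembling the two contributions gives a factor $\sum(p_\ell-1)+\mu q+1$ times the level-$(q-1)$ bound. The doubling of the degree sum inside the inherited bracket is what produces the factor $2^{q-1}$ at step $q$. It does so after the standard estimate
\[
\bigl(2S+\mu(q-1)+\cdots\bigr)^{q-1}\le 2^{q-1}\bigl(S+\mu q+1\bigr)^{q-1},
\]
where $S=\sum(p_\ell-1)$. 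Multiplying these factors over the $q$ steps gives $2^{q(q-1)/2}$.

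I expect the main obstacle to be the bookkeeping of the inductive degree sum so that it reproduces exactly the exponent $2^{q(q-1)/2}$ and the bracket $(\sum(p_\ell-1)+\mu q+1)^q$. A secondary difficulty is justifying genericity of $\gamma$ while preserving the original nondegenerate zeros. For the bookkeeping, I would first follow Khovanskii's own formulation in \cite{Khovanskii91}, which is stated in precisely this form, and only verify that our notation matches. For the perturbation, I would use the same local-persistence argument as in Lemma~\ref{lem:pfaffian}: nondegenerate zeros persist under small perturbation, so counting after perturbation bounds the original count.
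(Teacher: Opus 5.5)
You follow the route the paper has in mind. The paper only sketches the Rolle--Khovanskii elimination and defers to \emph{Fewnomials}, \S3.12, and your induction on $q$ fills in that sketch. Your degree bookkeeping is correct. With $S=\sum(p_\ell-1)$, the contact system has degree product $\prod p_\ell\,(S+\mu q)$ and inherited bracket $2S+\mu(2q-1)\le 2(S+\mu q)$, which is exactly where the factor $2^{q-1}$ at step $q$ comes from.

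However, one step does not go through as written. You apply the inductive hypothesis to $P_1,\ldots,P_k,Q$ on $\Gamma_{q-1}$, and that hypothesis requires all common zeros to be isolated and nondegenerate. Contact points need not be. $Q|_\gamma$ can have degenerate zeros, or vanish identically on a component of $\gamma$ that is everywhere tangent to $\ker\omega_q$, and then your Rolle inequality bounds nothing. Your remedy, that nondegenerate zeros persist under small perturbation, does not apply, because these zeros are not nondegenerate to begin with. As the paper's own example $x^2$ versus $x^2+\varepsilon$ shows, a degenerate zero can simply disappear.

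The missing idea is that Rolle's lemma for separating solutions gives a \emph{sign change}, not merely a zero, and sign changes are what survive perturbation.
\begin{itemize}
\item Write $Q=\omega_q(v)$, where $v$ is the nonvanishing tangent field of $\gamma$ defined by $\alpha(v)=\det(dP_1,\ldots,dP_k,\omega_1,\ldots,\omega_{q-1},\alpha)$. No extra frame or adjoined covectors are needed, since $k+q=M$ already makes the determinant square.
\item After your $\epsilon$-perturbation the crossings with $\Gamma$ are transverse, so $Q\neq0$ there.
\item Since $\Gamma$ bounds a region cooriented by $\omega_q$, $Q$ takes opposite signs at consecutive crossings along a component.
\item Replace $Q$ by $Q-\delta$, where $\delta$ is a regular value of $Q|_\gamma$ with $|\delta|$ smaller than $|Q|$ at the finitely many crossings under consideration. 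Then every gap still contains a zero, all common zeros of $P_1-\epsilon_1,\ldots,P_k-\epsilon_k,Q-\delta$ on $\Gamma_{q-1}$ are nondegenerate, and no degree increases. Now the inductive hypothesis legitimately applies.
\item Treat the sphere the same way: take $R^2$ to be a regular value of $|x|^2$ on $\gamma$.
\item Use that each noncompact component meets $S_R$ at least twice. This factor $\tfrac12$ is needed already for $q=1$, to get $\prod p_\ell\,(S+\mu+1)$ rather than $\prod p_\ell\,(S+\mu+2)$.
\end{itemize}
With these adjustments your induction closes and yields exactly the stated bound.
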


The theorem is proved by the Rolle--Khovanskii elimination of the $q$ Pfaffian equations. Successively replacing a separating integral hypersurface by a tangency equation reduces the problem to polynomial zero-dimensional systems in the ambient affine space. The successive doubling factors are
$
1,2,2^2,\ldots,2^{q-1},
$
whose product is
$
2^{q(q-1)/2}.
$
The ordinary B\'ezout theorem gives the factor
$\di
\prod_{\ell=1}^k p_\ell.
$
The degree of every tangency polynomial is bounded by
$\di
\sum_{\ell=1}^k(p_\ell-1)+\mu q+1,
$
because differentiating $P_\ell$ contributes $p_\ell-1$, while the polynomial coefficients of the $q$ one-forms contribute at most $\mu q$. The $q$ eliminated Pfaffian equations produce the $q$-th power of this common degree majorant.
The complete Rolle--Khovanskii induction, including the construction of the tangency determinants and the separating-solution argument, is given in Khovanskii's \emph{Fewnomials}, Section~3.12.  

\subsection*{The nondegenerate case}

Let
$
Z
=
\{x\in\Gamma:
P_1(x)=\cdots=P_k(x)=0\}.
$
By hypothesis every point of $Z$ is nondegenerate. All the assumptions of Theorem~\ref{thm:khovanskii} are satisfied with
$
M-k=q,
\,
\deg P_\ell=p_\ell,
\, 
\deg\omega_j\leq\mu.
$
Therefore
$$
\#Z
\leq
2^{q(q-1)/2}
\Big(
\prod_{\ell=1}^k p_\ell
\Big)
\Big(
\sum_{\ell=1}^k(p_\ell-1)+\mu q+1
\Big)^q.
$$
This proves the first assertion.

\subsection*{The perturbative extension}

Assume now that $Z$ is finite but may contain degenerate zeros. Let
$
P_{\ell,\varepsilon}
=
P_\ell+\varepsilon Q_\ell,
$
where the $Q_\ell$ are chosen so that
$
\deg P_{\ell,\varepsilon}\leq p_\ell.
$
Assume that the perturbation is generic, all common zeros of the perturbed restrictions on $\Gamma$ are nondegenerate, and every original isolated zero contributes at least one nearby perturbed zero counted with positive multiplicity.

Since $Z$ is finite, choose pairwise disjoint relatively compact coordinate neighborhoods
$
U_x\subset\Gamma,
\, 
x\in Z,
$
such that $x$ is the only original common zero in $\overline{U_x}$.
For sufficiently small generic $\varepsilon$, the persistence hypothesis gives at least one perturbed zero in every $U_x$. Since the neighborhoods are disjoint, these perturbed zeros are distinct. If
$$
Z_\varepsilon
=
\{y\in\Gamma:
P_{1,\varepsilon}(y)=\cdots=P_{k,\varepsilon}(y)=0\},
$$
then
$
\#Z\leq\#Z_\varepsilon.
$
The perturbed system is nondegenerate and degree preserving. Applying the already proved nondegenerate estimate gives
$$
\#Z_\varepsilon
\leq
2^{q(q-1)/2}
\left(
\prod_{\ell=1}^k p_\ell
\right)
\left(
\sum_{\ell=1}^k(p_\ell-1)+\mu q+1
\right)^q.
$$
Hence the same bound holds for $\#Z$.

\subsection*{Proof of Lemma~\ref{lem:pfaffian}}

\begin{proof}
The nondegenerate assertion is Theorem~\ref{thm:khovanskii} applied to the simple Pfaffian chain defining $\Gamma$. The numerical parameters are
$
q=M-k,
$
$
\deg P_\ell=p_\ell,
$
and
$
\deg\omega_j\leq\mu.
$
Substitution gives the displayed estimate.

For a finite degenerate zero set, use the degree-preserving generic perturbation from the statement. Pairwise disjoint neighborhoods of the original zeros contain distinct perturbed zeros. Therefore the number of original zeros is at most the number of perturbed nondegenerate zeros, which is bounded by the same expression.
\end{proof}

\subsection*{Why the persistence hypothesis is necessary}

An isolated degenerate real zero need not survive every small real perturbation. For example,
$
P(x)=x^2
$
has an isolated zero at $0$, whereas
$
P_\varepsilon(x)=x^2+\varepsilon
$
has no real zero when $\varepsilon>0$.
Thus the perturbative conclusion requires exactly the persistence hypothesis stated in the lemma. It is automatic under additional local hypotheses, such as nonzero local topological degree, but it is not automatic for an arbitrary isolated degenerate real zero.

\begin{remark}
Different conventions for the degree of a polynomial one-form can change the final additive constant in the degree majorant. The convention used here is that every coefficient of every defining one-form has degree at most $\mu$.
\end{remark}


\begin{proof}[Application of Lemma~\ref{lem:pfaffian}]
The ambient real space is
$
\R^{2n},
$
so
$
M=2n.
$
The affine subspace $A$ has codimension
$
q=2n-2r.
$
Hence
$
\Gamma_A=\Log^{-1}(A)
$
is a simple Pfaffian submanifold of dimension
$
k=M-q=2r.
$
The defining polynomial one-forms have coefficient degree at most
$
\mu=2n-1.
$

On $\Gamma_A$, use the $k=2r$ polynomials
$
P_{2j-1}=\Rea f_j
$
and
$
P_{2j}=\Ima f_j.
$
After the toric normalization,
$
p_{2j-1},p_{2j}\le2d_j.
$
Therefore
$$
\prod_{\ell=1}^{2r}p_\ell
\le
\prod_{j=1}^r(2d_j)^2
$$
and
$\di
\sum_{\ell=1}^{2r}(p_\ell-1)
\le
4D-2r.
$
Applying Lemma~\ref{lem:pfaffian} with
$
M=2n,
$
$
q=2n-2r,
$
$
k=2r,
$
and
$
\mu=2n-1
$
gives
$$
\#\bigl(V\cap\Log^{-1}(A)\bigr)
\le
2^{(2n-2r)(2n-2r-1)/2}
\left(
\prod_{j=1}^r(2d_j)^2
\right)
\left(
4D-2r+(2n-1)(2n-2r)+1
\right)^{2n-2r}.
$$
The right-hand side is precisely
$
B_{\mathrm{CI}}^{<}.
$
\end{proof}


\section*{Appendix B: Local Complete-Intersection Description of the Logarithmic Critical Locus,  Proof of Proposition \ref{prop:local-minors}}

Let
$
V=\{f_1=\cdots=f_r=0\}\subset(\C^\ast)^n
$
be a smooth complete intersection of codimension $r$, and assume
$
n\geq2r.
$
For every
$
1\leq j\leq r
$
and
$
1\leq i\leq n,
$
put
$
g_{ji}(z)=z_i\partial f_j/\partial z_i.
$
Write
$
g_{ji}=u_{ji}+iv_{ji},
$
where $u_{ji}$ and $v_{ji}$ are real-valued functions on $(\C^\ast)^n$.

The real logarithmic conormal matrix is
$$
\mathcal M_f(z)
=
\begin{pmatrix}
v_{11}&\cdots&v_{r1}&u_{11}&\cdots&u_{r1}\\
\vdots&&\vdots&\vdots&&\vdots\\
v_{1n}&\cdots&v_{rn}&u_{1n}&\cdots&u_{rn}
\end{pmatrix},
$$
an $n\times2r$ real matrix.
The critical locus of
$
\Log|_V
$
is characterized by
$$
\Crit(\Log|_V)
=
\left\{
z\in V:
\rank_{\R}\mathcal M_f(z)\leq2r-1
\right\}.
$$

Fix a set of row indices
$
I=\{i_1,\ldots,i_{2r-1}\}\subset\{1,\ldots,n\}
$
and a set of column indices
$$
J=\{j_1,\ldots,j_{2r-1}\}\subset\{1,\ldots,2r\}.
$$
Let
$
A_{I,J}(z)
=
\mathcal M_f(z)_{I,J}
$
be the corresponding
$
(2r-1)\times(2r-1)
$
submatrix. Consider the open set
$
U_{I,J}
=
\left\{
z\in V:
\det A_{I,J}(z)\neq0
\right\}.
$
Since there are $2r$ columns altogether, there is exactly one column not contained in $J$. Denote this nonpivot column by
$
j_\ast.
$
The rows not contained in $I$ are
$
k_1,\ldots,k_c,
$
where
$
c=n-(2r-1)=n-2r+1.
$

For every
$
1\leq\mu\leq c,
$
let
$
\Delta_{I,\mu}(z)
$
be the determinant of the $2r\times2r$ submatrix of $\mathcal M_f(z)$ obtained by taking the pivot rows $I$, adjoining the nonpivot row $k_\mu$, taking the pivot columns $J$, and adjoining the unique nonpivot column $j_\ast$. Thus
$
\Delta_{I,\mu}(z)
=
\det
\mathcal M_f(z)_{I\cup\{k_\mu\},\,J\cup\{j_\ast\}},
$
up to the sign determined by the chosen ordering of rows and columns.

\begin{proposition}[Local complete-intersection description]\label{prop:local-minors}
On the open set where the chosen pivot minor does not vanish, the critical locus of $\Log|_V$ is cut out inside $V$ by the $c=n-2r+1$ equations
$$
\Delta_{I,1}(z)=\cdots=\Delta_{I,c}(z)=0.
$$
Equivalently,
$
\Crit(\Log|_V)\cap U_{I,J}
=
\left\{
z\in U_{I,J}:
\Delta_{I,1}(z)=\cdots=\Delta_{I,c}(z)=0
\right\}.
$
\end{proposition}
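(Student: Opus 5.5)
The plan is to combine Lemma~\ref{lem:elimination} with a pointwise linear-algebra statement about matrices that contain an invertible $(2r-1)\times(2r-1)$ block. By Lemma~\ref{lem:elimination}, $\Crit(\Log|_V)=\{z\in V:\rank_{\R}\mathcal M_f(z)\le 2r-1\}$. It therefore suffices to prove the following for a fixed real $n\times 2r$ matrix $M$ whose submatrix $A=M_{I,J}$ is invertible: $\rank M\le 2r-1$ if and only if all the bordered minors $\det M_{I\cup\{k_\mu\},J\cup\{j_\ast\}}$, $1\le\mu\le c$, vanish. Applying this at each $z\in U_{I,J}$ with $M=\mathcal M_f(z)$ then gives the set equality. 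Note also that $\rank M\ge 2r-1$ automatically on $U_{I,J}$, so the condition is really $\rank M=2r-1$.

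First I permute rows and columns so that $I$ comes first and $j_\ast$ is last. Permutations change determinants only by a sign, which does not affect their vanishing. In these coordinates
\[
M=\begin{pmatrix} A & b\\ C & d\end{pmatrix},
\]
with $b\in\R^{2r-1}$, $C$ of size $c\times(2r-1)$, and $d\in\R^{c}$. Next I use the block factorization
\[
\begin{pmatrix} I_{2r-1} & 0\\ -CA^{-1} & I_c\end{pmatrix} M=\begin{pmatrix} A & b\\ 0 & d-CA^{-1}b\end{pmatrix}.
\]
Since the left factor is invertible, $\rank M=(2r-1)+\rank(d-CA^{-1}b)$. The Schur complement $d-CA^{-1}b$ is a single column, so $\rank M\le 2r-1$ exactly when $d-CA^{-1}b=0$, that is, when each of its $c$ entries vanishes.

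It remains to identify the $\mu$-th entry with the bordered minor. I apply the same elimination to the $2r\times 2r$ submatrix built from rows $I\cup\{k_\mu\}$ and columns $J\cup\{j_\ast\}$. The Schur determinant formula then gives
\[
\det M_{I\cup\{k_\mu\},J\cup\{j_\ast\}}=\det A\,\bigl(d_\mu-C_\mu A^{-1}b\bigr),
\]
where $C_\mu$ is the $\mu$-th row of $C$. Because $\det A\neq0$ on $U_{I,J}$, the vanishing of the $\mu$-th Schur entry is equivalent to $\Delta_{I,\mu}(z)=0$. Multiplying by $\det A$ clears denominators, so the $\Delta_{I,\mu}$ are polynomial in the real coordinates.

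I do not expect a genuine obstacle. The only point needing care is bookkeeping: the row and column reordering must be applied consistently, and one must observe that the sign conventions fixed in the definition of $\Delta_{I,\mu}$ do not affect the zero set. A further remark: the equations are imposed on $V$, so $z\in V$ is part of the hypothesis, and smoothness of $V$ enters only through Lemma~\ref{lem:elimination}.
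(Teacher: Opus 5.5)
Your proof is correct and follows the paper's argument essentially step for step. You reduce to the rank condition via Lemma~\ref{lem:elimination}, use block elimination to get $\rank M=(2r-1)+\rank(d-CA^{-1}b)$, and identify each Schur-complement entry with a bordered minor through $\det M_{I\cup\{k_\mu\},J\cup\{j_\ast\}}=\det A\,(d_\mu-C_\mu A^{-1}b)$; the paper additionally multiplies on the right to reach a block-diagonal form, which your one-sided elimination makes unnecessary because $A$ is invertible.
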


\subsection*{Block decomposition of the logarithmic conormal matrix}

After permuting rows and columns, which changes the relevant determinants only by signs, the matrix may be written in block form as
$\di
\mathcal M_f(z)
=
\begin{pmatrix}
A(z)&B(z)\\
C(z)&D(z)
\end{pmatrix}.
$
Here
$
A=A_{I,J}
$
has size
$
(2r-1)\times(2r-1),
$
$
B
$
has size
$
(2r-1)\times1,
$
$
C
$
has size
$
c\times(2r-1),
$
and
$
D
$
has size
$
c\times1.
$
Write
$
C
=
\begin{pmatrix}
C_1\\
\vdots\\
C_c
\end{pmatrix},
\,
D
=
\begin{pmatrix}
D_1\\
\vdots\\
D_c
\end{pmatrix},
$
where each $C_\mu$ is a row vector of length $2r-1$ and each $D_\mu$ is a scalar.
On
$
U_{I,J},
$
the matrix $A$ is invertible. Consequently, the rank of the full matrix can be computed by block Gaussian elimination.

\begin{lemma}[Schur-complement rank formula]\label{lem:schur-rank}
If $A$ is invertible, then
$$
\rank
\begin{pmatrix}
A&B\\
C&D
\end{pmatrix}
=
2r-1+\rank\left(D-CA^{-1}B\right).
$$
\end{lemma}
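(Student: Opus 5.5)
The plan is to perform block Gaussian elimination with invertible matrices on both sides, which leaves the rank unchanged and reduces the matrix to block-diagonal form. Put
$$
P=\begin{pmatrix} I_{2r-1}&0\\ -CA^{-1}&I_c\end{pmatrix},
\qquad
Q=\begin{pmatrix} I_{2r-1}&-A^{-1}B\\ 0&1\end{pmatrix}.
$$
Both $P$ and $Q$ are block unitriangular, so $\det P=\det Q=1$ and both are invertible. The first step is to compute the product directly:
$$
P\begin{pmatrix}A&B\\ C&D\end{pmatrix}Q
=\begin{pmatrix}A&0\\ 0&D-CA^{-1}B\end{pmatrix}.
$$
This is a routine multiplication. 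Left multiplication by $P$ subtracts $CA^{-1}$ times the top block row from the bottom block row, which turns $C$ into $0$ and $D$ into $D-CA^{-1}B$. Right multiplication by $Q$ then clears the block $B$.

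The second step uses that multiplication by invertible matrices preserves rank. The rank of a block-diagonal matrix is the sum of the ranks of its diagonal blocks. Since $A$ is invertible of size $2r-1$, we get $\rank A=2r-1$, and the formula follows immediately.

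There is no substantive obstacle. The only point needing care is bookkeeping: the Schur complement $D-CA^{-1}B$ is a $c\times 1$ column, so its rank is $0$ or $1$, and $I_c$ must be sized to match the $c$ nonpivot rows. As a consequence, we will record that $\rank\mathcal M_f\le 2r-1$ exactly when $D-CA^{-1}B=0$. Entrywise, the Laplace expansion gives $\det A\cdot(D_\mu-C_\mu A^{-1}B)=\Delta_{I,\mu}$ up to sign, and this is what Proposition~\ref{prop:local-minors} requires.
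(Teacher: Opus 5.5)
Your proof is correct and follows the paper's argument: the paper uses the same block unitriangular matrices (called $L$ and $R$ there) to reduce the matrix to $\begin{pmatrix}A&0\\0&D-CA^{-1}B\end{pmatrix}$. It then adds the ranks of the diagonal blocks, using $\rank A=2r-1$.
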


\begin{proof}
Consider the invertible block matrices
$
L
=
\begin{pmatrix}
I_{2r-1}&0\\
-CA^{-1}&I_c
\end{pmatrix}
$
and
$
R
=
\begin{pmatrix}
I_{2r-1}&-A^{-1}B\\
0&1
\end{pmatrix}.
$
A direct multiplication gives
$$
L
\begin{pmatrix}
A&B\\
C&D
\end{pmatrix}
R
=
\begin{pmatrix}
A&0\\
0&D-CA^{-1}B
\end{pmatrix}.
$$
Multiplication on the left and right by invertible matrices preserves rank. Hence
$$
\rank
\begin{pmatrix}
A&B\\
C&D
\end{pmatrix}
=
\rank(A)+\rank(D-CA^{-1}B).
$$
Since
$
\rank(A)=2r-1,
$
the claimed formula follows.
\end{proof}

\subsection*{Equivalence with the criticality condition}

On the pivot chart, the block $A$ is invertible, so
$
\rank_{\R}\mathcal M_f(z)\geq2r-1.
$
Therefore the criticality condition
$
\rank_{\R}\mathcal M_f(z)\leq2r-1
$
is equivalent to the equality
$
\rank_{\R}\mathcal M_f(z)=2r-1.
$
By Lemma~\ref{lem:schur-rank}, this is equivalent to
$
\rank\left(D-CA^{-1}B\right)=0.
$
Since
$
D-CA^{-1}B
$
is a $c\times1$ column vector, its rank is zero if and only if every entry vanishes. Thus the criticality condition is equivalent to
$
D_\mu-C_\mu A^{-1}B=0,
\,
1\leq\mu\leq c.
$
These are the rational Schur-complement equations. Multiplying the $\mu$-th equation by the nonzero scalar $\det A$ gives
$
(\det A)D_\mu-C_\mu\operatorname{adj}(A)B=0,
$
because
$
A^{-1}
=
\frac{\operatorname{adj}(A)}{\det A}.
$
On the open set where $\det A\neq0$, the rational and polynomial equations are equivalent.

\subsection*{Identification with the maximal minors}

For every $\mu$, consider the $2r\times2r$ matrix
$
M_\mu
=
\begin{pmatrix}
A&B\\
C_\mu&D_\mu
\end{pmatrix}.
$
By the block determinant formula,
$
\det M_\mu
=
\det(A)
\left(
D_\mu-C_\mu A^{-1}B
\right).
$
Equivalently,
$$
\det M_\mu
=
(\det A)D_\mu-C_\mu\operatorname{adj}(A)B.
$$

Up to the sign caused by the row and column permutations used to place the pivot block first, this determinant is precisely
$
\Delta_{I,\mu}.
$
Therefore
$
\Delta_{I,\mu}=0
$
if and only if
$
D_\mu-C_\mu A^{-1}B=0
$
on
$
U_{I,J}.
$
It follows that
$
\rank_{\R}\mathcal M_f(z)\leq2r-1
$
if and only if
$
\Delta_{I,1}(z)=\cdots=\Delta_{I,c}(z)=0.
$

\subsection*{Proof of Proposition~\ref{prop:local-minors}}

\begin{proof}
Let
$
z\in U_{I,J}.
$
Since the pivot block
$
A_{I,J}(z)
$
is invertible,
$
\rank_{\R}\mathcal M_f(z)\geq2r-1.
$
The point $z$ is critical for $\Log|_V$ if and only if
$
\rank_{\R}\mathcal M_f(z)\leq2r-1,
$
and therefore, on the pivot chart, if and only if
$
\rank_{\R}\mathcal M_f(z)=2r-1.
$
Writing $\mathcal M_f$ in block form and applying the Schur-complement rank formula gives
$
\rank_{\R}\mathcal M_f(z)
=
2r-1+
\rank_{\R}
\left(
D(z)-C(z)A(z)^{-1}B(z)
\right).
$
Thus
$
\rank_{\R}\mathcal M_f(z)=2r-1
$
if and only if
$
D(z)-C(z)A(z)^{-1}B(z)=0.
$
This vector equation is equivalent to the $c$ scalar equations
$
D_\mu(z)-C_\mu(z)A(z)^{-1}B(z)=0,
\,
1\leq\mu\leq c.
$
Since
$
\det A(z)\neq0,
$
each equation is equivalent to
$$
(\det A(z))D_\mu(z)
-
C_\mu(z)\operatorname{adj}(A(z))B(z)
=
0.
$$
By the block determinant formula, the left-hand side is, up to sign,
$
\Delta_{I,\mu}(z).
$
Therefore
$$
z\in\Crit(\Log|_V)\cap U_{I,J}
$$
if and only if
$
\Delta_{I,1}(z)=\cdots=\Delta_{I,c}(z)=0.
$
This proves the asserted local description.
\end{proof}

\subsection*{Why exactly $c=n-2r+1$ equations occur}

The pivot block uses
$
2r-1
$
of the $n$ rows. Hence the number of remaining rows is
$
n-(2r-1)=n-2r+1.
$
There is only one nonpivot column because the matrix has $2r$ columns and the pivot block uses $2r-1$ columns.
Every nonpivot row therefore produces exactly one enlarged $2r\times2r$ minor by adjoining that row and the unique nonpivot column to the pivot block. Hence the number of local equations is exactly
$
c=n-2r+1.
$

\subsection*{Local complete-intersection interpretation}

The proposition gives a set-theoretic local description of the critical locus. To conclude that the critical locus is a smooth complete intersection of codimension $c$ inside $V$, one needs an additional transversality or Jacobian-rank hypothesis:
$
d\Delta_{I,1},\ldots,d\Delta_{I,c}
$
must be linearly independent on the relevant locus after restriction to $T_zV$.
Under this additional hypothesis, the implicit function theorem gives
$
\operatorname{codim}_V
\left(
\Crit(\Log|_V)\cap U_{I,J}
\right)
=
c.
$
Since
$
\dim_{\R}V=2n-2r,
$
the corresponding real dimension is
$
2n-2r-c
=
2n-2r-(n-2r+1)
=
n-1.
$
This is the expected dimension of the top critical locus whose logarithmic image is a hypersurface in $\R^n$.
Without the differential-independence assumption, the proposition remains correct as a local equation-theoretic statement, but the word ``complete intersection'' should be interpreted as referring to the number of displayed equations rather than automatically asserting regularity.


\section*{Appendix C: Explicit Parameter Substitution in the Regime $n\geq 2r$}

Let
$
V=\{f_1=\cdots=f_r=0\}\subset(\C^\ast)^n
$
be a smooth complete intersection of codimension $r$, and assume
$
n\geq2r.
$
Write
$
D=\sum_{j=1}^r d_j.
$
Let
$
L\subset\R^n
$
be a generic affine line transverse to the top-dimensional smooth strata of the contour
$
\CA_V.
$

The purpose of this section is to apply the simple-Pfaffian root estimate to the critical-lift system over $L$, identify every parameter
$
M,
$
$
q,
$
$
k,
$
$
\mu,
$
and every polynomial degree
$
p_\ell,
$
and derive both the one-chart constant and the global pivot-chart factor without any hidden substitution.

\begin{lemma}[Simple-Pfaffian root estimate]\label{lem:pfaffian2}
Let $\Gamma\subset\R^M$ be a simple Pfaffian submanifold of codimension $q$ defined by polynomial one-forms whose coefficient degrees are at most $\mu$. Assume
$
\dim_{\R}\Gamma=k.
$
Let
$
P_1,\ldots,P_k
$
be real polynomials of degrees
$
p_1,\ldots,p_k
$
whose restrictions to $\Gamma$ have only isolated nondegenerate common zeros. Then the number of those zeros is at most
$$
2^{q(q-1)/2}
\Big(
\prod_{\ell=1}^k p_\ell
\Big)
\Big(
\sum_{\ell=1}^k(p_\ell-1)+\mu q+1
\Big)^q.
$$
\end{lemma}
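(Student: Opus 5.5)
The lemma is Lemma~\ref{lem:pfaffian} restated, without the perturbative clause. So the plan is to reduce it to Khovanskii's simple-Pfaffian B\'ezout theorem, Theorem~\ref{thm:khovanskii}, by matching parameters. Nothing here depends on the specific critical-lift system.

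First, I unpack the hypothesis on $\Gamma$. A simple Pfaffian submanifold of codimension $q$ in $\R^M$ comes with a chain $\R^M=\Gamma_0\supset\Gamma_1\supset\cdots\supset\Gamma_q=\Gamma$. Here $\Gamma_j$ is a separating integral hypersurface of $\omega_j|_{\Gamma_{j-1}}$, and every coefficient of every $\omega_j$ has degree at most $\mu$. Each inclusion has codimension one, so $k=\dim_{\R}\Gamma=M-q$. Thus the number $k$ of polynomials $P_1,\ldots,P_k$ is exactly the number Theorem~\ref{thm:khovanskii} requires, and the system is square on $\Gamma$.

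Second, I check the nondegeneracy hypothesis in the sense of Appendix~A. The differentials $d(P_\ell|_\Gamma)_x$ are linearly independent in $T_x^\ast\Gamma$ at each common zero. This is precisely the hypothesis of Theorem~\ref{thm:khovanskii}, and it also guarantees isolatedness, since the number of equations equals $\dim\Gamma$.

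Third, I substitute $q=M-k$, the degrees $p_\ell$, and the form-degree bound $\mu$ into Khovanskii's estimate. This gives
\[
2^{q(q-1)/2}\Big(\prod_{\ell=1}^k p_\ell\Big)\Big(\sum_{\ell=1}^k(p_\ell-1)+\mu q+1\Big)^q .
\]
If some $P_\ell$ have degree strictly less than $p_\ell$, the bound only increases, since it is monotone in each $p_\ell$. So upper bounds on the degrees suffice, which is how the lemma is used in Appendix~C.

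The only real obstacle is degree conventions. One must confirm that the ``coefficient degree at most $\mu$'' convention in the hypothesis matches the one under which Theorem~\ref{thm:khovanskii} is stated; the remark at the end of Appendix~A notes that other conventions shift the additive constant. I would verify this against the tangency-determinant construction: each Rolle--Khovanskii elimination step adds at most $\mu$ from the one-form and $\sum(p_\ell-1)$ from the Jacobian minors. Once that matches, the lemma follows immediately.
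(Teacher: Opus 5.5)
Your proposal is correct and matches the paper's argument. The paper also proves the lemma by applying Khovanskii's simple-Pfaffian B\'ezout theorem (Theorem~\ref{thm:khovanskii}) to the chain defining $\Gamma$, with $q=M-k$, $\deg P_\ell=p_\ell$ and coefficient degree $\mu$, and reading off the bound; your added remarks on monotonicity in the $p_\ell$ and on the one-form degree convention agree with how the paper uses the lemma in Appendix~C and with its closing remark in Appendix~A.
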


Write
$
z_i=x_i+iy_i.
$
The complex torus
$
(\C^\ast)^n
$
is an open subset of
$
\R^{2n}
$
with coordinates\\ 
 $
x_1,y_1,\ldots,x_n,y_n.
$
Hence the ambient real dimension in Lemma~\ref{lem:pfaffian} is
$
M=2n.
$

\subsection*{The Pfaffian codimension $q$}

An affine line in $\R^n$ has codimension
$
n-1.
$
Choose independent affine equations
$
\ell_\nu(u)=c_\nu,
\,
1\leq\nu\leq n-1,
$
whose common solution set is $L$.
Pulling these equations back by $\Log$ gives
$
\varphi_\nu(z)
=
\ell_\nu(\Log z)-c_\nu.
$
Therefore
$$
\Gamma_L:=\Log^{-1}(L)
=
\{z\in(\C^\ast)^n:
\varphi_1(z)=\cdots=\varphi_{n-1}(z)=0\}.
$$

Since $\Log$ is a submersion and the affine equations of $L$ are independent, the differentials
$
d\varphi_1,\ldots,$\\
$d\varphi_{n-1}
$
are linearly independent. Thus
$
\Gamma_L
$
has codimension
$
n-1
$
in
$
\R^{2n}.
$
Consequently,
$
q=n-1.
$
The dimension of the simple Pfaffian manifold is
$
k=M-q.
$
Substituting
$
M=2n
$
and
$
q=n-1
$
gives
$
k=2n-(n-1)=n+1.
$
Hence the polynomial system imposed on
$
\Gamma_L
$
must contain exactly
$
n+1
$
equations in order to be square.

\subsection*{The Pfaffian coefficient degree $\mu$}

Write
$\di
\varphi_\nu(z)
=
\sum_{i=1}^n b_{\nu i}\log|z_i|-c_\nu.
$
Then
$$
d\varphi_\nu
=
\sum_{i=1}^n
b_{\nu i}
\frac{x_i\,dx_i+y_i\,dy_i}{x_i^2+y_i^2}.
$$
Multiplying by
$\di
R(x,y)=\prod_{j=1}^n(x_j^2+y_j^2)
$
gives the polynomial one-form
$$
\alpha_\nu
=
\sum_{i=1}^n
b_{\nu i}
(x_i\,dx_i+y_i\,dy_i)
\prod_{j\ne i}(x_j^2+y_j^2).
$$
Every coefficient of $\alpha_\nu$ has degree
$
1+2(n-1)=2n-1.
$
Therefore
$
\mu=2n-1.
$

\subsection*{The logarithmic conormal equations}

Let
$
\mathcal M_f(z)
$
be the real logarithmic conormal matrix of size
$
n\times2r.
$
On the top critical stratum one has
$
\rank_{\R}\mathcal M_f(z)=2r-1.
$
Choose a pivot chart
$
\kappa=(I,J)
$
such that the corresponding
$
(2r-1)\times(2r-1)
$
minor is nonzero. After permuting rows and columns, write
$$
\mathcal M_f
=
\begin{pmatrix}
A_\kappa&B_\kappa\\
C_\kappa&D_\kappa
\end{pmatrix},
$$
where
$
A_\kappa
$
has size
$
(2r-1)\times(2r-1).
$

The criticality condition is equivalent on this chart to
$
D_\kappa-C_\kappa A_\kappa^{-1}B_\kappa=0.
$
After clearing the denominator, one obtains
$
S_{\kappa,\mu}
=
(\det A_\kappa)D_{\kappa,\mu}
-
C_{\kappa,\mu}\operatorname{adj}(A_\kappa)B_\kappa
=
0.
$
The number of nonpivot rows is
$
n-(2r-1)=n-2r+1.
$
Put
$
c=n-2r+1.
$
Hence there are exactly
$
c
$
Schur-complement equations.

\subsection*{The number of polynomial equations}

The defining equations of $V$ give
$
2r
$
real equations:
$
\Rea f_j=0,
\, 
\Ima f_j=0,
\, 
1\leq j\leq r.
$
The criticality condition gives
$
c=n-2r+1
$
additional real equations:
$$
S_{\kappa,1}=0,\ldots,S_{\kappa,c}=0.
$$

Therefore the total number of polynomial equations is
$
2r+c
=
2r+(n-2r+1)
=
n+1.
$
This equals
$
k=\dim_{\R}\Gamma_L.
$
Thus the system is square on
$
\Gamma_L.
$

\subsection*{The degrees of the defining equations}

After the same toric normalization used in the Lang--Shapiro--Shustin method, the real and imaginary parts of $f_j$ have degrees at most
$
2d_j.
$
Define
$
P_{2j-1}=\Rea f_j,
\,
P_{2j}=\Ima f_j.
$
Then
$
p_{2j-1}\leq2d_j,
\,
p_{2j}\leq2d_j.
$
For the uniform degree-only bound, one uses
$
p_{2j-1}=p_{2j}=2d_j.
$

\subsection*{The degrees of the Schur-complement equations}

Each maximal conormal minor uses exactly two columns associated with each defining polynomial $f_j$. Every such column has degree at most
$
2d_j.
$
Therefore every determinant monomial has degree at most
$\di
2\sum_{j=1}^r2d_j=4D.
$
Hence
$
\deg S_{\kappa,\mu}\leq4D,
\, 
1\leq\mu\leq c.
$
For the uniform bound, write
$
p_{2r+\mu}=4D,
\, 
1\leq\mu\leq c.
$
The complete degree list is therefore
$
p_{2j-1}=p_{2j}=2d_j,
\, 
1\leq j\leq r,
$
and
$
p_{2r+\mu}=4D,
\, 
1\leq\mu\leq n-2r+1.
$

\subsection*{The product of the polynomial degrees}

The degree product in Lemma~\ref{lem:pfaffian} is
$\di
\prod_{\ell=1}^{k}p_\ell
=
\prod_{\ell=1}^{n+1}p_\ell.
$
The first
$
2r
$
factors give
$\di
\prod_{j=1}^r(2d_j)^2.
$
The remaining
$
c=n-2r+1
$
factors are all bounded by
$
4D.
$
Thus
$\di
\prod_{\ell=1}^{n+1}p_\ell
\leq
\Big(
\prod_{j=1}^r(2d_j)^2
\Big)
(4D)^{n-2r+1}.
$
This is the polynomial-degree product in the one-chart estimate.

\subsection*{The degree-minus-one sum}

For the defining equations,
$\di
\sum_{\ell=1}^{2r}(p_\ell-1)
=
\sum_{j=1}^r
\left(
(2d_j-1)+(2d_j-1)
\right).
$

Hence
$\di
\sum_{\ell=1}^{2r}(p_\ell-1)
=
4D-2r.
$
For the
$
c=n-2r+1
$
Schur-complement equations,
$\di
\sum_{\mu=1}^{c}
(p_{2r+\mu}-1)
=
c(4D-1).
$
Therefore
$\di
\sum_{\ell=1}^{n+1}(p_\ell-1)
=
4D-2r+(n-2r+1)(4D-1).
$

\subsection*{The Pfaffian contribution $\mu q$}

The coefficient-degree parameter is
$
\mu=2n-1,
$
and the Pfaffian codimension is
$
q=n-1.
$
Hence
$
\mu q
=
(2n-1)(n-1).
$
The full bracket in Lemma~\ref{lem:pfaffian} is therefore
$\di
\sum_{\ell=1}^{n+1}(p_\ell-1)+\mu q+1
=
4D-2r
+
(n-2r+1)(4D-1)
+
(2n-1)(n-1)
+
1.
$
Define
$$
\Theta_{\mathrm{CI}}
=
4D-2r
+
(n-2r+1)(4D-1)
+
(2n-1)(n-1)
+
1.
$$
Thus the bracket is exactly
$
\Theta_{\mathrm{CI}}.
$
The exponential Pfaffian factor is
$
2^{q(q-1)/2}.
$
Since
$
q=n-1,
$
one obtains
$
2^{q(q-1)/2}
=
2^{(n-1)(n-2)/2}.
$

{\it Complete substitution into Lemma~\ref{lem:pfaffian2}.}
Every parameter is now explicit:
$
M=2n,
\,
q=n-1,
\,
k=n+1,
\,
\mu=2n-1,
$
$
c=n-2r+1,
$\, 
$
p_{2j-1}=p_{2j}=2d_j,
\,
1\leq j\leq r,
$
and
$
p_{2r+\mu}=4D,
\, 
1\leq\mu\leq c.
$

Lemma~\ref{lem:pfaffian2} gives, in the pivot chart $\kappa$,
$$
\#Z_{L,\kappa}
\leq
2^{(n-1)(n-2)/2}
\Big(
\prod_{j=1}^r(2d_j)^2
\Big)
(4D)^{n-2r+1}
\Theta_{\mathrm{CI}}^{\,n-1},
$$
where
$
\Theta_{\mathrm{CI}}
=
4D-2r
+
(n-2r+1)(4D-1)
+
(2n-1)(n-1)
+
1.
$
Define
$$
B_{\mathrm{CI}}^{\mathrm{one}}
=
2^{(n-1)(n-2)/2}
\Big(
\prod_{j=1}^r(2d_j)^2
\Big)
(4D)^{n-2r+1}
\Theta_{\mathrm{CI}}^{\,n-1}.
$$
Then
$
\#Z_{L,\kappa}
\leq
B_{\mathrm{CI}}^{\mathrm{one}}.
$

\subsection*{From critical lifts to contour points}

Every point
$
x\in L\cap\CA_V
$
has at least one critical lift
$
z\in V
$
such that
$
\Log(z)=x.
$
If all relevant lifts lie in the chosen chart $\kappa$, then
$$
\#(L\cap\CA_V)
\leq
\#Z_{L,\kappa}
\leq
B_{\mathrm{CI}}^{\mathrm{one}}.
$$
Distinct critical lifts may have the same logarithmic image, so the passage from lifts to contour points cannot increase the count.

\subsection*{The global pivot-chart factor}

The top critical stratum consists of matrices of rank exactly
$
2r-1.
$
Every such matrix has at least one nonzero
$
(2r-1)\times(2r-1)
$
minor.
The number of choices of
$
2r-1
$
rows among $n$ rows is
$
\binom{n}{2r-1}.
$
The number of choices of
$
2r-1
$
columns among $2r$ columns is
$
\binom{2r}{2r-1}=2r.
$
Hence the number of pivot charts is
$
N_{\mathrm{chart}}
=
2r\binom{n}{2r-1}.
$

Let
$
Z_{L,\kappa}
$
be the set of critical lifts in chart $\kappa$. Since the pivot charts cover the top critical stratum,
$\di
Z_L
\subseteq
\bigcup_\kappa Z_{L,\kappa}.
$
Therefore
$\di
\#Z_L
\leq
\sum_\kappa\#Z_{L,\kappa}.
$
Applying the same one-chart bound in every chart gives
$\di
\#Z_L
\leq
2r\binom{n}{2r-1}
B_{\mathrm{CI}}^{\mathrm{one}}.
$
Define
$\di
B_{\mathrm{CI}}^{\mathrm{global}}
=
2r\binom{n}{2r-1}
B_{\mathrm{CI}}^{\mathrm{one}}.
$
Thus
$$
B_{\mathrm{CI}}^{\mathrm{global}}
=
2r\binom{n}{2r-1}
2^{(n-1)(n-2)/2}
\Big(
\prod_{j=1}^r(2d_j)^2
\Big)
(4D)^{n-2r+1}
\Theta_{\mathrm{CI}}^{\,n-1}.
$$
Since
$
\#(L\cap\CA_V)\leq\#Z_L,
$
one obtains
$
\#(L\cap\CA_V)
\leq
B_{\mathrm{CI}}^{\mathrm{global}}.
$
Taking the supremum over all generic affine lines gives
$$
\Rdeg(\CA_V)
\leq
B_{\mathrm{CI}}^{\mathrm{global}}.
$$

\begin{proof}[Application of Lemma~\ref{lem:pfaffian} in the regime $n\geq2r$]
The ambient real space is
$
\R^{2n},
$
so
$
M=2n.
$
Since $L$ is an affine line, $\Gamma_L=\Log^{-1}(L)$ has Pfaffian codimension
$
q=n-1
$
and dimension
$
k=M-q=n+1.
$
The polynomial one-forms defining $\Gamma_L$ have coefficient degree at most
$
\mu=2n-1.
$

On one pivot chart, the square system consists of the $2r$ equations
$
\Rea f_j=0
$
and
$
\Ima f_j=0,
$
together with
$
c=n-2r+1
$
Schur-complement equations. After toric normalization, their degrees satisfy
$
p_{2j-1},p_{2j}\leq2d_j
$
and
$
p_{2r+\mu}\leq4D.
$
Hence
$\di
\prod_{\ell=1}^{n+1}p_\ell
\leq
\Big(
\prod_{j=1}^r(2d_j)^2
\Big)
(4D)^{n-2r+1}
$
and
$\di
\sum_{\ell=1}^{n+1}(p_\ell-1)
\leq
4D-2r+(n-2r+1)(4D-1).
$
Applying Lemma~\ref{lem:pfaffian} with
$
M=2n,
$
$
q=n-1,
$
$
k=n+1,
$
and
$
\mu=2n-1
$
gives
$\di
\#Z_{L,\kappa}
\leq
2^{(n-1)(n-2)/2}
\Big(
\prod_{j=1}^r(2d_j)^2
\Big)
(4D)^{n-2r+1}
\Theta_{\mathrm{CI}}^{\,n-1},
$
where
$
\Theta_{\mathrm{CI}}
=
4D-2r
+
(n-2r+1)(4D-1)
+
(2n-1)(n-1)
+
1.
$
This is the one-chart constant
$
B_{\mathrm{CI}}^{\mathrm{one}}.
$
The rank-$2r-1$ stratum is covered by
$
2r\binom{n}{2r-1}
$
pivot charts. Summing the one-chart estimate gives
$\di
B_{\mathrm{CI}}^{\mathrm{global}}
=
2r\binom{n}{2r-1}
B_{\mathrm{CI}}^{\mathrm{one}}.
$
\end{proof}

\subsection{The equality case $n=2r$}

When
$
n=2r,
$
one has
$
c=n-2r+1=1.
$
Thus there is exactly one Schur-complement equation.
The one-chart constant becomes
$$
B_{\mathrm{CI}}^{\mathrm{eq}}
=
2^{(n-1)(n-2)/2}
\Big(
\prod_{j=1}^r(2d_j)^2
\Big)
(4D)
\Theta_{\mathrm{eq}}^{\,n-1},
$$
where
$
\Theta_{\mathrm{eq}}
=
4D-2r+(4D-1)+(2n-1)(n-1)+1.
$
Since
$
n=2r,
$
this simplifies to
$
\Theta_{\mathrm{eq}}
=
8D-n+(2n-1)(n-1).
$


\section*{Appendix D: $\Log^{-1}(L)$ Is a Simple Pfaffian Submanifold of Codimension $n-1$}

Let
$
L\subset\R^n
$
be an affine line. Since an affine line has real dimension $1$, its codimension in $\R^n$ is
$
\operatorname{codim}_{\R^n}(L)=n-1.
$
Consequently, there are $n-1$ linearly independent affine-linear equations whose common zero set is $L$. Thus there exist vectors
$
b^{(1)},\ldots,b^{(n-1)}\in\R^n
$
and constants
$
c_1,\ldots,c_{n-1}\in\R
$
such that
$$
L
=
\left\{
u\in\R^n:
\langle b^{(\nu)},u\rangle=c_\nu,
\quad
1\le\nu\le n-1
\right\}.
$$

Write
$
z_i=x_i+iy_i.
$
Since
$
|z_i|=\sqrt{x_i^2+y_i^2},
$
one has
$\di
\log|z_i|
=
\frac12\log(x_i^2+y_i^2).
$
A point
$
z\in(\C^\ast)^n
$
belongs to
$
\Log^{-1}(L)
$
if and only if
$\di
\sum_{i=1}^n b_i^{(\nu)}\log|z_i|
=
c_\nu,
\,
1\le\nu\le n-1.
$
Define
$\di
\varphi_\nu(z)
=
\sum_{i=1}^n b_i^{(\nu)}\log|z_i|-c_\nu.
$
Then
$$
\Log^{-1}(L)
=
\left\{
z\in(\C^\ast)^n:
\varphi_1(z)=\cdots=\varphi_{n-1}(z)=0
\right\}.
$$

The differential of $\varphi_\nu$ is
$\di
d\varphi_\nu
=
\sum_{i=1}^n
b_i^{(\nu)}
\frac{x_i\,dx_i+y_i\,dy_i}{x_i^2+y_i^2}.
$
The denominators do not vanish on $(\C^\ast)^n$. Multiplying by
$\di
R(x,y)
=
\prod_{j=1}^n(x_j^2+y_j^2)
$
gives the polynomial one-form
$
\alpha_\nu
=
R(x,y)d\varphi_\nu.
$
Explicitly,
$\di
\alpha_\nu
=
\sum_{i=1}^n
b_i^{(\nu)}
\left(
x_i\,dx_i+y_i\,dy_i
\right)
\prod_{j\ne i}(x_j^2+y_j^2).
$
Every coefficient of $\alpha_\nu$ is a polynomial of degree
$
1+2(n-1)=2n-1.
$
Since $R(x,y)>0$ on $(\C^\ast)^n$, the one-forms $\alpha_\nu$ and $d\varphi_\nu$ have the same kernels. Hence the level hypersurface
$
\{\varphi_\nu=0\}
$
is an integral hypersurface of the polynomial Pfaffian equation
$
\alpha_\nu=0.
$

Define
$
\Gamma_0=(\C^\ast)^n
$
and successively
$
\Gamma_\nu
=
\left\{
z\in\Gamma_{\nu-1}:
\varphi_\nu(z)=0
\right\},
\, 
1\le\nu\le n-1.
$
Then
$
\Gamma_{n-1}=\Log^{-1}(L).
$
This nested construction is what is meant by saying that $\Log^{-1}(L)$ is a simple Pfaffian submanifold: it is obtained by intersecting successive separating integral hypersurfaces of polynomial one-forms.
It remains to justify that the $n-1$ equations are independent. Let
$
B:\R^n\longrightarrow\R^{n-1}
$
be the linear map whose rows are the vectors $b^{(\nu)}$. Since these vectors are linearly independent,
$
\operatorname{rank}B=n-1.
$
Moreover,
$
(\varphi_1,\ldots,\varphi_{n-1})
=
B\circ\Log-c.
$
Therefore
$
d(\varphi_1,\ldots,\varphi_{n-1})_z
=
B\circ d\Log_z.
$

The differential of $\Log$ is surjective. Indeed,
$\di
d\Log_z(\dot x,\dot y)
=
\left(
\frac{x_1\dot x_1+y_1\dot y_1}{x_1^2+y_1^2},
\ldots,
\frac{x_n\dot x_n+y_n\dot y_n}{x_n^2+y_n^2}
\right).
$
Given
$
v=(v_1,\ldots,v_n)\in\R^n,
$
choose
$
\dot x_i=v_ix_i,
\,
\dot y_i=v_iy_i.
$
Then
$
d\Log_z(\dot x,\dot y)=v.
$
Thus
$
\operatorname{rank}d\Log_z=n.
$
Since $B$ has rank $n-1$ and $d\Log_z$ is surjective, the composition
$
B\circ d\Log_z
$
has rank $n-1$. Therefore
$
d\varphi_1,\ldots,d\varphi_{n-1}
$
are linearly independent at every point of $\Log^{-1}(L)$.
The Regular Value Theorem gives
$
\operatorname{codim}_{\R^{2n}}\Log^{-1}(L)
=
n-1.
$
Since
$
\dim_{\R}(\C^\ast)^n=2n,
$
one obtains
$
\dim_{\R}\Log^{-1}(L)
=
2n-(n-1)
=
n+1.
$

\begin{proposition}
Let
$
L\subset\R^n
$
be an affine line. Then
$
\Log^{-1}(L)
$
is a smooth real-analytic submanifold of $(\C^\ast)^n$ of codimension $n-1$ and dimension $n+1$. Moreover, it is a simple Pfaffian submanifold defined by $n-1$ polynomial one-forms whose coefficients have degree at most $2n-1$.
\end{proposition}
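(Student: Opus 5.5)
Unwinding the definitions reduces the proposition to three verifications: $\Log^{-1}(L)$ is cut out by $n-1$ real-analytic equations; these equations have independent differentials, so the level set is a smooth submanifold of the expected dimension; and the successive level sets form a simple Pfaffian chain for polynomial one-forms whose coefficients have degree at most $2n-1$. All ingredients are already in the discussion preceding the statement.

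\emph{Equations and smoothness.} Write $L=\{u:\langle b^{(\nu)},u\rangle=c_\nu,\ 1\le\nu\le n-1\}$ with linearly independent $b^{(\nu)}$. Then $\Log^{-1}(L)$ is the common zero set of
\[
\varphi_\nu=\sum_i b^{(\nu)}_i\log|z_i|-c_\nu=\tfrac12\sum_i b^{(\nu)}_i\log(x_i^2+y_i^2)-c_\nu .
\]
Each $\varphi_\nu$ is real-analytic on $(\C^\ast)^n$. Since $(\varphi_\nu)_\nu=B\circ\Log-c$ with $B$ of rank $n-1$, and $d\Log_z$ is surjective (the radial vector $\dot x_i=v_ix_i$, $\dot y_i=v_iy_i$ maps to $v$), the differential $B\circ d\Log_z$ has rank $n-1$ at every point. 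The regular value theorem, in its real-analytic form, then shows that $\Log^{-1}(L)$ is a smooth real-analytic submanifold of codimension $n-1$, hence of dimension $2n-(n-1)=n+1$. The same argument shows that every intermediate set $\Gamma_\nu=\{\varphi_1=\cdots=\varphi_\nu=0\}$ is a smooth submanifold of codimension $\nu$, because any subfamily of the $b^{(\nu)}$ is still linearly independent.

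\emph{Pfaffian structure.} Set $\alpha_\nu=R\,d\varphi_\nu$ with $R=\prod_j(x_j^2+y_j^2)$. The coefficients of $\alpha_\nu$ are the polynomials $b^{(\nu)}_ix_i\prod_{j\ne i}(x_j^2+y_j^2)$ and $b^{(\nu)}_iy_i\prod_{j\ne i}(x_j^2+y_j^2)$, each of degree $1+2(n-1)=2n-1$. Since $R>0$ on the torus, $\ker\alpha_\nu=\ker d\varphi_\nu$, so the hypersurface $\Gamma_\nu\subset\Gamma_{\nu-1}$ is an integral manifold of $\alpha_\nu|_{\Gamma_{\nu-1}}$. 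It is nonsingular there because $d\varphi_\nu$ does not vanish on $T\Gamma_{\nu-1}$; this follows from the independence proved above.

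\emph{Separating property (main obstacle).} The remaining and only genuinely delicate condition in Khovanskii's definition is that each $\Gamma_\nu$ be \emph{separating} in $\Gamma_{\nu-1}$: its complement must split into two pieces, with the coorientation given by $\alpha_\nu$ pointing from one piece to the other. I would establish this directly from the fact that $\Gamma_\nu$ is the zero level of a global function. The restriction $\varphi_\nu|_{\Gamma_{\nu-1}}$ is a smooth function with $0$ as a regular value, so $\Gamma_{\nu-1}\setminus\Gamma_\nu=\{\varphi_\nu>0\}\sqcup\{\varphi_\nu<0\}$. On $\Gamma_\nu$, the form $\alpha_\nu=R\,d\varphi_\nu$ is a positive multiple of $d\varphi_\nu$, so its coorientation points into $\{\varphi_\nu>0\}$. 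This is exactly the separating condition. The point needing care is that the ambient space in Khovanskii's framework is $\R^{2n}$ rather than the open torus $(\C^\ast)^n$. I would handle this by noting that the coordinate hyperplanes are the zero set of the polynomial $R$, and that Khovanskii's theory applies to domains of this kind; alternatively, one can use $\Gamma_0=(\C^\ast)^n$ as in the chain defined above. Iterating over $\nu=1,\dots,n-1$ gives the chain $\Gamma_0\supset\cdots\supset\Gamma_{n-1}=\Log^{-1}(L)$ of separating integral hypersurfaces with coefficient degree at most $2n-1$, which proves the proposition.
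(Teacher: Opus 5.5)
Your proposal is correct and follows essentially the same route as the paper's proof in Appendix D. It uses the same functions $\varphi_\nu=\sum_i b^{(\nu)}_i\log|z_i|-c_\nu$, the same rank argument for $B\circ d\Log_z$ via the radial lift, and the same forms $\alpha_\nu=R\,d\varphi_\nu$ of coefficient degree $2n-1$, with the chain starting at $\Gamma_0=(\C^\ast)^n$. Your explicit verification that each $\Gamma_\nu$ is separating in $\Gamma_{\nu-1}$ (as the regular zero level of $\varphi_\nu|_{\Gamma_{\nu-1}}$, cooriented by $\alpha_\nu$) fills in a step the paper only asserts, and your remark about $(\C^\ast)^n$ versus $\R^{2n}$ correctly flags the mismatch with the chain $\R^M=\Gamma_0$ used in Appendix A.
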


\begin{proof}
The smoothness and codimension follow from the surjectivity of $d\Log$ and the independence of the affine equations defining $L$. The Pfaffian description follows from the polynomial one-forms
$\di
\alpha_\nu
=
\Big(
\prod_{j=1}^n(x_j^2+y_j^2)
\Big)
d\varphi_\nu.
$
Their integral hypersurfaces are the level sets $\varphi_\nu=0$, and their coefficient degree is at most $2n-1$.
\end{proof}

\subsection*{Direct parametrization}

Write
$
L=\{a+t\ell:t\in\R\},
$
where
$
a,\ell\in\R^n
$
and
$
\ell\ne0.
$
A point of $\Log^{-1}(L)$ satisfies
$
|z_i|=e^{a_i+t\ell_i}.
$
Hence
$
z_i
=
e^{a_i+t\ell_i}e^{i\theta_i},
\,
\theta_i\in\R/2\pi\Z.
$
Therefore
$
\Log^{-1}(L)
\simeq
\R\times(S^1)^n.
$
This directly gives
$
\dim_{\R}\Log^{-1}(L)=1+n=n+1.
$
The Pfaffian description is needed because it allows one to apply Khovanskii's theorem to polynomial equations restricted to $\Log^{-1}(L)$.

We conclude that the codimension $n-1$ comes from the fact that an affine line in $\R^n$ is defined by $n-1$ independent affine equations. Pulling those equations back by $\Log$ gives $n-1$ independent real-analytic equations on $(\C^\ast)^n$. After clearing the nonvanishing denominators in their differentials, one obtains polynomial Pfaffian one-forms of coefficient degree $2n-1$.
Thus
$
\dim_{\R}\Log^{-1}(L)
=
2n-(n-1)
=
n+1.
$





\begin{thebibliography}{99}

\bibitem{Bergman71}
G.~M.~Bergman,
\emph{The logarithmic limit-set of an algebraic variety},
Transactions of the American Mathematical Society
\textbf{157} (1971), 459--469.

\bibitem{Bernstein75}
D.~N. Bernstein,
The number of roots of a system of equations,
\emph{Functional Analysis and Its Applications} \textbf{9} (1975), no.~3, 183--185.


\bibitem{BieriGroves84}
R.~Bieri and J.~R.~J.~Groves,
\emph{The geometry of the set of characters induced by valuations},
Journal f\"ur die Reine und Angewandte Mathematik
\textbf{347} (1984), 168--195.







\bibitem{ForsbergPassareTsikh00}
M.~Forsberg, M.~Passare, and A.~Tsikh,
Laurent determinants and arrangements of hyperplane amoebas,
\emph{Advances in Mathematics} \textbf{151} (2000), no.~1, 45--70.

\bibitem{Fulton93}
W.~Fulton,
\emph{Introduction to Toric Varieties},
Annals of Mathematics Studies, Vol.~131,
Princeton University Press, Princeton, NJ, 1993.

\bibitem{Fulton98}
W.~Fulton,
\emph{Intersection Theory},
2nd ed.,
Springer, Berlin, 1998.

\bibitem{GKZ94}
I.~M. Gelfand, M.~M. Kapranov, and A.~V. Zelevinsky,
\emph{Discriminants, Resultants, and Multidimensional Determinants},
Birkh\"auser, Boston, 1994.


\bibitem{GriffithsHarris78}
P.~Griffiths and J.~Harris,
\emph{Principles of Algebraic Geometry},
Wiley-Interscience, New York, 1978.




 \bibitem{Khovanskii91}
A.~G. Khovanskii,
\emph{Fewnomials},
Translations of Mathematical Monographs, Vol.~88,
American Mathematical Society, Providence, RI, 1991.


\bibitem{LangShapiroShustin21}
L.~Lang, B.~Shapiro, and E.~Shustin,
On the number of intersection points of the contour of an amoeba with a line,
\emph{Indiana University Mathematics Journal}
\textbf{70} (2021), no.~4, 1335--1353.
 \bigskip
 

\bibitem{MaclaganSturmfels15}
D.~Maclagan and B.~Sturmfels,
\emph{Introduction to Tropical Geometry},
Graduate Studies in Mathematics, Vol.~161,
American Mathematical Society, Providence, RI, 2015.

\bibitem{Mikhalkin00}
G.~Mikhalkin,
Real algebraic curves, the moment map and amoebas,
\emph{Annals of Mathematics} \textbf{151} (2000), no.~1, 309--326.

\bibitem{Mikhalkin04}
G.~Mikhalkin,
Amoebas of algebraic varieties and tropical geometry,
in \emph{Different Faces of Geometry},
International Mathematical Series, Vol.~3,
Kluwer Academic/Plenum Publishers, New York, 2004, pp.~257--300.

\bibitem{NissePassare17}
M.~Nisse and M.~Passare,
Amoebas and Coamoebas of Linear Spaces,
in M.~Andersson, J.~Boman, C.~Kiselman, P.~Kurasov, and R.~Sigurdsson (eds.),
\emph{Analysis Meets Geometry},
Trends in Mathematics,
Birkh\"auser, Cham, 2017, pp.~63--80.

\bibitem{NisseSottile13}
M.~Nisse and F.~Sottile,
The phase limit set of an algebraic variety,
\emph{Algebra \& Number Theory} \textbf{7} (2013), no.~2, 339--352.

\bibitem{NisseSottile22}
M.~Nisse and F.~Sottile,
Describing amoebas,
\emph{Pacific Journal of Mathematics} \textbf{317} (2022), no.~1, 187--205.

\bibitem{PassareRullgard04}
M.~Passare and H.~Rullg{\aa}rd,
Amoebas, Monge--Amp\`ere measures, and triangulations of the Newton polytope,
\emph{Duke Mathematical Journal} \textbf{121} (2004), no.~3, 481--507.

\bibitem{Tevelev07}
J.~Tevelev,
Compactifications of subvarieties of tori,
\emph{American Journal of Mathematics} \textbf{129} (2007), no.~4, 1087--1104.

\end{thebibliography}
 \end{document}